\documentclass[a4paper,10pt]{amsart}
\usepackage{txfonts,amsfonts}
\usepackage[arrow,matrix]{xy}
\usepackage{amsmath,amssymb,amscd,bbm,amsthm,mathrsfs,dsfont,bm}
\usepackage{extarrows}
\usepackage{latexsym}
\usepackage{graphicx}
\usepackage{color}
\usepackage{xy}
\usepackage{leftidx}
\input xy
\input xypic
\xyoption{all}

\numberwithin{equation}{section}

\textwidth=34pc 
\textheight=51pc 
 
\topmargin=0mm
\oddsidemargin=1.3cm 
\evensidemargin=1.3cm   

\theoremstyle{plain}
\newtheorem{theorem}{Theorem}[section]
\newtheorem{corollary}[theorem]{Corollary}
\newtheorem{lemma}[theorem]{Lemma}

\newtheorem{proposition}[theorem]{Proposition}

\theoremstyle{definition}
\newtheorem{definition}[theorem]{Definition}

\theoremstyle{remark}
\newtheorem{remark}[theorem]{Remark}

\DeclareMathOperator{\lw}{\rm LW}

\DeclareMathOperator{\gkdim}{\rm GKdim }

\DeclareMathOperator{\sh}{\rm Sh }

\DeclareMathOperator{\lex}{\rm lex }
\DeclareMathOperator{\glex}{\rm glex }

\DeclareMathOperator{\gr}{\rm gr }

\newcommand{\B}{\mathcal{B}}

\newcommand{\N}{\mathcal{N}}

\renewcommand{\L}{\mathbb{L}}

\begin{document}

\title{The structure of connected (graded) Hopf algebras revisited}

\author{C.-C. Li,\quad G.-S. Zhou}

\address{\rm Li \newline \indent
School of Mathematics and Statistics, Ningbo University, Ningbo 315211, China
\newline \indent E-mail: thesilencesea@gmail.com
\newline\newline
\indent Zhou \newline \indent
School of Mathematics and Statistics, Ningbo University, Ningbo 315211, China
\newline \indent E-mail: zhouguisong@nbu.edu.cn}

\begin{abstract}
Let $H$ be a connected graded Hopf algebra over a field of characteristic zero and $K$ an arbitrary graded Hopf subalgebra of $H$. We show that there is a family of homogeneous elements of $H$ indexed by a totally order set that
satisfy several desirable conditions, which reveal interesting connections between $H$
and $K$. In particular, the set on non-decreasing products of these elements give a basis
for $H$ as a left and right graded $K$-module. As one of its consequences, we see that $H$ is a graded iterated Hopf Ore extension of $K$ of derivation type provided that $H$ is of finite Gelfand-Kirillov dimension. The main tool of this work is Lyndon words, along the idea developed by Lu, Shen and the second-named author in \cite{ZSL3}.
\end{abstract}

\subjclass[2010]{16T05, 68R15, 16P90, 16S15, 16W50}


\keywords{connected Hopf algebra, Hopf Ore extension, Lyndon word, Gelfand-Kirillov dimension}

\maketitle





\section*{Introduction}

\newtheorem{maintheorem}{\bf{Theorem}}
\renewcommand{\themaintheorem}{\Alph{maintheorem}}
\newtheorem{maincorollary}[maintheorem]{\bf{Corollary}}
\renewcommand{\themaincorollary}{}
\newtheorem{mainquestion}[maintheorem]{\bf{Question}}
\renewcommand{\themainquestion}{}


Throughout the paper, we work over a fixed field denoted by $k$ which is of characteristic zero. All vector spaces, algebras, coalgebras and unadorned tensors  are over $k$.

A \emph{connected Hopf algebra} is a Hopf algebra with coradical of dimension one. Recall that the coradical of a Hopf algebra is defined to be the sum of all of its simple subcoalgebras.  Though the condition of being connected is quite restrictive, it has rich examples  from diverse fields of mathematics. Two well-known classes of examples are the universal enveloping algebras of Lie algebras and the coordinate rings of unipotent algebraic groups, which are cocommutative and commutative respectively.  Another celebrated class of examples appear in combinatorics, including the  Hopf algebras of symmmectric functions, of quasi-symmetric functions and of permutations \cite{GR, ABS, AS}. These Hopf algebras tend to have bases naturally parametrized by combinatorial objects (such as partitions, compositions, permutations, etc.), and they are always of infinite Gelfand-Kirillov dimension (GK dimension, for short). Recently, connected Hopf algebras of GK dimensions up to  $4$ were completely classified by Wang, Zhang and Zhuang   \cite{Zh,WZZ}, when the base field is algebraically closed. It is a part of the longstanding project of classifying Hopf algebras that satisfy some finiteness conditions.  Some  new interesting connected Hopf algebras occur in the classification results.

The motivation of this paper is to understand the structure of connected Hopf algebras. We do it by considering the subclass of \emph{connected graded Hopf algebras}, which are Hopf algebras $H$ equipped with a grading $H=\bigoplus_{n\geq 0}H_n$  such that  $H$ is simultaneously a graded algebra and a graded coalgebra,  the antipode  preserves the given grading and moreover $H_0$ is one dimensional.  It is shown in \cite[Theorem A]{ZSL3} that every connected graded Hopf algebra has  \emph{PBW generators} which satisfy some desirable conditions. By PBW generators  of an  algebra $A$ we mean a family of elements of $A$ indexed by a totally ordered set such that the nondecreasing product of these indexed elements form a linear basis of $A$. In this paper, we prove a relative version of \cite[Theorem A]{ZSL3} as follows.

\begin{maintheorem}\label{maintheorem-generator}(Theorem \ref{generator-connected-graded-Hopf-algebra})
	Let $H$ be a connected graded Hopf algebra and $K$ a graded Hopf subalgebra. Then there is an indexed family $(z_\gamma)_{\gamma\in \Gamma}$ of homogeneous elements of $H$ and a total order $\leq$ on $\Gamma$ such that
	\begin{enumerate}
		\item $\Delta(z_\gamma)  \in 1\otimes z_\gamma +z_\gamma\otimes 1 + (H_+^{<\gamma} \otimes H_+^{<\gamma})_{\deg(z_\gamma)}$ for every index  $\gamma \in \Gamma$, where $H^{<\gamma}$ is the subalgebra of $H$ generated by $K$ and $\{~ z_\delta~|~ \delta \in \Gamma,~ \delta<\gamma ~\}$;
		\item $[z_\gamma , H^{<\gamma}] \subseteq H^{<\gamma}$ for every index  $\gamma \in \Gamma$, where $[-,-]$ denotes the usual commutator;
		\item $\{~ z_{\gamma_1} \cdots z_{\gamma_n} ~|~ n\geq0,~ \gamma_1,\ldots, \gamma_n \in \Gamma, ~ \gamma_1\leq \cdots \leq \gamma_n ~\}$ is a homogeneous basis of $H$  as a left graded $K$-module as well as a right graded $K$-module;
		\item $\gkdim H = \gkdim K + \#(\Gamma)$, where $\#(\Gamma)$ is the number of elements of $\Gamma$.
	\end{enumerate}
\end{maintheorem}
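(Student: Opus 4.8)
The plan is to follow the Lyndon-word strategy of \cite{ZSL3}, upgraded to the relative setting over $K$. First I would realize $H$ as a quotient of a free algebra in a way that makes $K$ visible combinatorially: choose a totally ordered homogeneous set $X = X_K \sqcup X'$, where $X_K$ indexes a set of homogeneous algebra generators of $K$ and $X'$ supplies the remaining generators of $H$, ordered so that every letter of $X_K$ precedes every letter of $X'$. This furnishes a surjective graded algebra map $\pi : k\langle X\rangle \twoheadrightarrow H$ restricting to a surjection $k\langle X_K\rangle \twoheadrightarrow K$. Equipping the free monoid $X^*$ with the degree-lexicographic order $\glex$, the leading-word function $\lw$ is then defined on $k\langle X\rangle$ and on $\ker\pi$.

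Second, I would invoke the Lyndon and Gr\"obner--Shirshov machinery. Every word factors uniquely as a nonincreasing product of Lyndon words, and the standard words, those that are not $\lw$ of any element of $\ker\pi$, map under $\pi$ to a homogeneous basis of $H$. The crucial structural input from \cite{ZSL3}, which I would adapt, is that a word is standard if and only if each of its Lyndon factors is a standard Lyndon word; for each standard Lyndon word $w$ the image of its canonical bracketing $[w]$ is a homogeneous element $z_w \in H$, and the nondecreasing monomials in the $z_w$ form a basis of $H$. Because $X_K$ is an initial segment, the standard Lyndon words split into those lying in $X_K^*$, whose bracketings generate $K$, and those involving a letter of $X'$. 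I take $\Gamma$ to be the latter set, ordered by the restriction of $\glex$, and set $z_\gamma := z_w$ for $\gamma = w$.

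Third, I would verify the four conclusions, establishing (1)--(3) together through the reduction analysis. Conclusion (1) is the analytic heart: by induction along $\leq$ and on degree, I would show that $\Delta(z_\gamma) - 1\otimes z_\gamma - z_\gamma\otimes 1$ is supported on tensor products of bracketings of proper Lyndon factors of $\gamma$, all strictly smaller; since these lie in $H_+^{<\gamma}$, the error term falls in $(H_+^{<\gamma}\otimes H_+^{<\gamma})_{\deg z_\gamma}$. This is exactly where characteristic zero and the coassociativity and counit axioms enter, as in the absolute case of \cite{ZSL3}, now carried out modulo the subalgebra $H^{<\gamma}$. Conclusion (2) I would read off from the Lyndon reduction rules together with (1): rewriting a decreasing product $z_\gamma a$ with $a\in H^{<\gamma}$ into nondecreasing standard monomials yields $a z_\gamma$ plus correction terms of strictly smaller $z_\gamma$-content, and almost-primitivity forces these corrections to lie in $H^{<\gamma}$, so that $H^{\leq\gamma} = H^{<\gamma}[z_\gamma;\delta_\gamma]$ is an Ore extension of derivation type. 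These two facts make the ordered factorization of standard words into $X_K$-factors and $X'$-factors yield conclusion (3): the nondecreasing $z_\gamma$-monomials form a homogeneous basis of $H$ both as a left and as a right graded $K$-module, with homogeneity automatic. Finally, (4) is a Hilbert-series count: the two-sided free $K$-module basis indexed by nondecreasing $z_\gamma$-monomials gives $\gkdim H = \gkdim K + \#(\Gamma)$ by additivity of GK dimension over iterated Ore extensions.

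The main obstacle I anticipate lies in the relative Lyndon-word analysis underlying the second and third steps: proving that the standard Lyndon words separate cleanly into those generating $K$ and those involving a new letter, and that their bracketings simultaneously furnish a two-sided $K$-module PBW basis and satisfy the relative almost-primitivity in (1). Maintaining compatibility between the algebra filtration, through $\lw$ and $\glex$, and the coalgebra and degree filtration while working modulo $K$ rather than modulo $k$ is the delicate point, and it is what allows (1), (2) and (3) to be proved in one stroke.
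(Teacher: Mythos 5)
Your route is essentially the paper's own (Theorem \ref{generator-connected-graded-algebra}): the same choice of generators with the $K$-letters forming an initial segment of the alphabet, the same Lyndon/Gr\"obner machinery imported from \cite{ZSL3}, and the same $\Gamma$ (irreducible Lyndon words involving a new letter) with the restricted lexicographic order. But two steps you treat as routine are precisely where the paper has to work, and one of them is broken as you state it. First, the ``clean split'' of the standard Lyndon words is \emph{not} combinatorial bookkeeping following from $X_K$ being an initial segment: for a general ideal, a word in $\langle X_K\rangle$ can be $I$-reducible without being reducible modulo $J=I\cap k\langle X_K\rangle$, because the relation realizing it as a leading word may involve the new letters. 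The paper's decisive new input, Lemma \ref{Lyndon-compare} ($\N_J=\N_I\cap\langle X'\rangle$), is proved from the pseudo-comultiplication hypothesis via Proposition \ref{quasi-Lie-imply-LK}(2) --- every $I$-reducible Lyndon word is congruent mod $I$ to a polynomial in bracketings of strictly lex-smaller irreducible Lyndon words --- combined with the fact that, in the paper's convention, a Lyndon word carries its largest letter leftmost, so any Lyndon word lex-below a word on $X'$ lies in $\langle X'\rangle$. Your conventions spoil even this last point: with the standard Chen--Fox--Lyndon convention (nonincreasing factorizations, smallest letter leftmost) and the $X_K$-letters smallest, take $a<c$ in $X_K$ and $b\in X'$; then $aab$ is Lyndon, involves the new letter $b$, yet $aab<_{\lex}ac$, so mixed Lyndon words need not dominate the $K$-Lyndon words --- this kills both the splitting argument and the inclusion $K\subseteq H^{<\gamma}$ needed for your (1) and (2). ``$K$-letters first'' must be coupled with the paper's reversed convention (Lyndon words lexicographically greater than their proper suffixes), or everything dualized coherently.

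Second, your justification of (4) ``by additivity of GK dimension over iterated Ore extensions'' is not available: $(\Gamma,\leq_{\lex})$ need not be well-ordered --- the paper's remark after Theorem \ref{generator-connected-graded-algebra} gives an example where the induced order on $\Gamma$ is $1>2>3>\cdots$, with no least element --- so $H$ is not an iterated Ore extension of $K$ in general (that structure only appears later, under the hypothesis $\gkdim H<\infty$, in Theorem \ref{IHOE-graded}), and GK-additivity can fail for Ore extensions anyway. The paper instead computes absolutely, $\gkdim H=\#(\N_I)$ and $\gkdim K=\#(\N_J)$, by the Hilbert-series limsup estimate of Proposition \ref{quasi-Lie-imply-LK}(6), and subtracts. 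Relatedly, the right $K$-module half of (3) is not a formal consequence of the ordered factorization: since the $\N_J$-bracketings sit on the \emph{left} of nondecreasing monomials, the left-module basis is immediate, but the right-module statement occupies roughly half of the paper's proof (a well-ordered induction on bounded-degree sequences via Lemma \ref{irreducible-Lyndon-subalgebra} and part (2)), and your proposed inductive Ore structure $H^{\leq\gamma}=H^{<\gamma}[z_\gamma;\delta_\gamma]$ again presupposes the well-ordering that can fail. A small correction of emphasis: coassociativity and the counit play no role here --- all of the paper's Section 2 runs with a mere pseudo-comultiplication, and characteristic zero enters only through \cite[Proposition 2.4]{ZSL3}.
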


The above theorem has some interesting consequences. For example, it follows that every commutative connected (graded) Hopf algebra is a (graded) polynomial algebra over any of its (graded) Hopf subalgebras in some family of (graded) variables, see Corollary \ref{polynomial-generator} and Corollary \ref{connected-commutative-Hopf-algebra}. As special cases, some classical results are reconfirmed: (1) every commutative affine connected Hopf algebrna is a polynomial algebra  \cite[Theorem 0.1]{BGZ}; (2) the shuffle algebras are graded polynomial algebras \cite[Theorem 3.1.1]{Rad1}; (3)  the Hopf algebra of quasi-symmetric functions is a graded polynomial algebra over the  Hopf algebra of symmetric functions \cite[Corollary 2.2]{MaRe}. We expect more applications of Theorem \ref{maintheorem-generator} for the study of group schemes and combinatorial Hopf algebras.

Another interesting consequence of Theorem \ref{maintheorem-generator} concerns Hopf Ore extensions. This line of research begun in \cite{Pan} and has attracted a number of studies recently \cite{BOZZ, Hu, BZ2, ZSL3}. Recall that an \emph{$n$-step iterated Hopf Ore extension} (\emph{n-step IHOE}, for short) of a Hopf algebra $R$ is a Hopf algebra $H$ which contains $R$ as a Hopf subalgebra and there is a chain of Hopf subalgebras $R=H^0\subset H^1 \subset \cdots \subset H^n= H$ with each of the extensions $H^i\subset H^{i+1}$ being  an Ore extension as algebras (Definition \ref{definition-Hopf-Ore}). According to \cite[Theorem B]{ZSL3}, a connected graded Hopf algebra of GK dimension $d$ can be obtained as a $d$-step IHOE of the base field. In this paper, we generalize \cite[Theorem B]{ZSL3} as follows.

\begin{maintheorem}\label{maintheorem-Hopf-Ore}(Theorem \ref{IHOE-graded})
Let $H$ be a connected graded Hopf algebra of GK dimension $d$  and $K$ a graded Hopf subalgebra of GK dimension $e$. Then $H$ is a graded $(d-e)$-step  IHOE of $K$ of derivation type (Definition \ref{definition-Hopf-Ore}).
\end{maintheorem}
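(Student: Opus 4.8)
The plan is to derive Theorem B directly from Theorem A (Theorem \ref{maintheorem-generator}). So I will start by invoking Theorem A to obtain a homogeneous family $(z_\gamma)_{\gamma\in\Gamma}$ together with a total order $\leq$ on $\Gamma$ satisfying conditions (1)--(4). Since $\gkdim H = d$ and $\gkdim K = e$ are both finite, condition (4) immediately gives $\#(\Gamma) = d - e$, so $\Gamma$ is a finite set. I would then fix an enumeration $\gamma_1 < \gamma_2 < \cdots < \gamma_{d-e}$ of $\Gamma$ consistent with the total order, and set $H^i$ to be the subalgebra of $H$ generated by $K$ together with $z_{\gamma_1}, \ldots, z_{\gamma_i}$ (so $H^0 = K$ and, once I check it, $H^{d-e} = H$). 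Note that in the notation of Theorem A, $H^i = H^{<\gamma_{i+1}}$ for $i < d-e$, and $H^{d-e} = H$ by condition (3). These will be the candidate terms of the iterated Ore extension chain.

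\smallskip

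The heart of the argument is to show that each inclusion $H^{i} \subset H^{i+1}$ is an Ore extension of derivation type that is moreover a Hopf subalgebra inclusion, i.e.\ a Hopf Ore extension of derivation type. First, condition (3) applied to the relevant index sets shows that the ordered monomials in $z_{\gamma_1}, \ldots, z_{\gamma_{i+1}}$ form a left (and right) $K$-basis of $H^{i+1}$; comparing with the basis of $H^i$, the monomials in which $z_{\gamma_{i+1}}$ appears with a fixed exponent $m$ are free over $H^i$, so that $H^{i+1} = \bigoplus_{m\geq 0} H^i z_{\gamma_{i+1}}^{m}$ is free as a left $H^i$-module with basis $\{z_{\gamma_{i+1}}^m\}_{m\geq 0}$. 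Writing $z := z_{\gamma_{i+1}}$, this is precisely the defining freeness property of an Ore extension $H^{i+1} = H^i[z;\sigma,\delta]$. Condition (2), namely $[z, H^{<\gamma_{i+1}}] = [z, H^i] \subseteq H^i$, forces $\sigma = \mathrm{id}$ and identifies $\delta = [z,-]$ as a derivation of $H^i$ valued in $H^i$; this is what makes the extension of \emph{derivation type}. I would then verify that $H^{i}$ is a Hopf subalgebra of $H^{i+1}$: condition (1) says $\Delta(z) \in 1\otimes z + z\otimes 1 + (H_+^{<\gamma_{i+1}}\otimes H_+^{<\gamma_{i+1}})_{\deg(z)}$, and since $H_+^{<\gamma_{i+1}} = H_+^i \subseteq H^i$, the comultiplication of $z$ lands in $H^i \otimes H^i + 1\otimes z + z\otimes 1$, so $\Delta(H^{i+1}) \subseteq H^{i+1}\otimes H^{i+1}$ and $H^{i+1}$ is closed under the coproduct; closure under the antipode in the connected graded setting then follows by the standard induction on degree. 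This exhibits each $H^i \subset H^{i+1}$ as a graded Hopf Ore extension of derivation type in the sense of Definition \ref{definition-Hopf-Ore}.

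\smallskip

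To complete the proof I would assemble these steps into the required chain $K = H^0 \subset H^1 \subset \cdots \subset H^{d-e} = H$. The fact that $H^{d-e} = H$ is exactly condition (3): the ordered monomials over the full index set $\Gamma$ form a $K$-basis of $H$, and they all lie in the subalgebra generated by $K$ and all the $z_\gamma$, which is $H^{d-e}$. Each link in the chain is a Hopf Ore extension of derivation type by the previous paragraph, and the homogeneity of all the $z_\gamma$ together with the graded nature of conditions (1)--(3) guarantees that the chain and each Ore extension respect the grading, so the chain is a \emph{graded} $(d-e)$-step IHOE of $K$ of derivation type.

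\smallskip

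I expect the main obstacle to be the careful bookkeeping needed to match the abstract data of Theorem A to the precise definition of an (iterated) Hopf Ore extension: in particular, verifying from condition (3) that $H^{i+1}$ is genuinely \emph{free} as a one-sided $H^i$-module on the powers of $z_{\gamma_{i+1}}$ (not merely over $K$), and checking that $\sigma$ is indeed the identity rather than some nontrivial automorphism. The freeness requires separating the $K$-basis of $H^{i+1}$ according to the exponent of $z_{\gamma_{i+1}}$ and recognizing the lower-exponent part as an $H^i$-basis; the triviality of $\sigma$ is where condition (2) does the essential work, as it shows $z$ commutes with $H^i$ modulo $H^i$ with no twisting. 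Once these two points are nailed down, the Hopf-theoretic closure conditions follow routinely from conditions (1) and the connected graded structure.
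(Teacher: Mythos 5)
Your proposal is correct and follows essentially the same route as the paper's proof: both invoke Theorem \ref{generator-connected-graded-Hopf-algebra}, set $H^i$ to be the subalgebra generated by $K$ and the first $i$ elements $z_1,\dots,z_i$ (possible since condition (4) makes $\Gamma$ finite of cardinality $d-e$), use conditions (2) and (3) with induction to get that $\{1,z_i,z_i^2,\dots\}$ is a free $H^{i-1}$-basis of $H^i$, read off from condition (2) that $\sigma=\mathrm{id}$ and $\delta_i=[z_i,-]$ is a derivation so that $H^i=H^{i-1}[z_i;\delta_i]$, and use condition (1) to see each $H^i$ is a (graded) Hopf subalgebra. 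Your noted caveats (freeness over $H^i$ rather than merely over $K$, which requires the straightening supplied by condition (2), and the antipode closure by induction on degree) are exactly the points the paper's terse proof leaves to the reader, and you resolve them correctly.
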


Note that connected Hopf algebras of finite GK dimension are not necessarily IHOEs' of the base field. However, the known counterexamples are all the universal enveloping algebras of certain Lie algebras \cite[Example 3.1 (iv)]{BOZZ}. By the classification results obtained in \cite{Zh, WZZ}, connected Hopf algebras of GK dimensions up to $4$ are all IHOEs' of the universal enveloping algebra of its primitive space. In view of the above theorem, it is natural to ask the following question.

\begin{mainquestion}
Let $H$ be a connected Hopf algebra of finite GK dimension. Is it necessarily that $H$ is an IHOE of the universal enveloping algebra of its primitive space?
\end{mainquestion}

Now let us sketch the proof of Theorem \ref{maintheorem-generator}. First we choose a set $X$ of homogeneous generators for $H$ such that $X':= X\cap K$ generates $K$, and then fix a well order on $X$ satisfying certain conditions related to $X'$. Denote by $I$ (resp. $J$) the ideal of defining relations  on $X$ (resp. $X'$) for  $H$ (resp. $K$). In addition, let $\N_I$ (resp. $\N_J$) be the set of $I$-irreducible (resp. $J$-irreducible) Lyndon words on $X$ (resp. $X'$). By the combinatorial properties of Lyndon words and the standard bracketing on words, one may construct from $X$ a new family of homogeneous generators $(z_\gamma)_{\gamma\in \N_I}$ for $H$. The coalgebra structure of $H$  assures that this new family of generators satisfy some desirable conditions (Proposition \ref{quasi-Lie-imply-LK}). Note that several key parts of this  observation were already obtained in \cite[Proposition 2.4]{ZSL3}, of which the arguments have benefited from some of the ideas in \cite{Kh, Uf}. The decisive new observation for this work is the equality $\N_J= \N_I\cap \langle X'\rangle$, due to the feature of Lyndon words (Lemma \ref{Lyndon-compare}). In Theorem \ref{generator-connected-graded-algebra} we combine Proposition \ref{quasi-Lie-imply-LK} and Lemma \ref{Lyndon-compare}, as well as some other ideas, to conclude that the family $(z_\gamma)_{\gamma\in \Gamma}$, where $\Gamma:= \N_I\backslash \N_J$, together with the restriction of the lexicographic order to $\Gamma$ satisfy the requirements.

The paper is organized as follows. In section 1, we recall the definition and basic facts of Lyndon words. In section 2, we study the structure of connected graded algebras that admit pseudo-comultiplications (Definition \ref{definition-pseudo-comultiplication}) by taking advantage of the excellent  combinatorial properties of Lyndon words. This section is the technical part of the paper and as a matter of fact, Theorem \ref{maintheorem-generator} is a direct consequence of Theorem \ref{generator-connected-graded-algebra}. The final section studies the structure of connected (graded) Hopf algebras. In particular, Theorems  \ref{maintheorem-generator} and \ref{maintheorem-Hopf-Ore} are proved there.

\section{Preliminaries}

\label{subsection-Lyndon}
In this section, we introduce some basic notations that will be used in the sequel, recall 
the definition of Lyndon words and  present some of their well-known properties for reader's convenience. The  material on Lyndon words is mostly extracted or modified  from \cite{GF, Kh, Lo,ZSL3}.

Throughout this section, $X$ stands for a well-ordered alphabet.  If $X=\{x_1,\ldots, x_\theta\}$ for some positive integer $\theta$, then $X$ is tacitly equipped with the natural order $x_1<\cdots <x_\theta$. 
We denote by $\langle X\rangle $ the set of all words on $X$. The empty word is denoted  by $1$.  The length of a word $u$ is denoted by $|u|$.

Let $u, \, v$ be two words on $X$. We call $v$ a \textit{factor}  of  $u$  if there exist words $w_1,w_2$ on $X$ such that  $w_1vw_2=u$. If $w_1$ (resp. $w_2$) can be chosen to be the empty word then $v$ is called a \emph{prefix} (resp. \emph{suffix}) of $u$.  A  factor (resp. prefix, resp. suffix) $v$ of $u$ is called  \textit{proper} if $v\neq u$. The \emph{lexicographic order} on $\langle X\rangle$, denoted by $\leq_{\rm lex}$, is defined as follows. For words $u,v\in \langle X\rangle$,
\begin{eqnarray*}
	\label{definition-deglex}
	u <_{\rm lex} v\,\, \Longleftrightarrow \,\, \left\{
	\begin{array}{llll}
		v \text{ is a  proper prefix of } u,\quad \text{or}&& \\
		u=rxs,\, v=ryt,\, \text{ with } x,y\in X;\, x<y;  \, r,s,t\in \langle X\rangle.
	\end{array}\right.
\end{eqnarray*}
Clearly, it is a total order compatible with the concatenation of words from left but not from right.  For example if $x,y\in X$ with $x>y$, one has $x>_{\rm lex}x^2$ but $xy<_{\rm lex} x^2y$. 

The next result is well-known, we include a proof for readers' convenience.

\begin{lemma}\label{lex-stablize}
	For any integer $n\geq 1$, the restriction of $\leq_{\lex}$ on the set of all words on $X$ of degree less than or equal to $n$ is a well order.
\end{lemma}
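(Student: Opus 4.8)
The plan is to reduce the statement to the classical fact that a finite lexicographic power of a well-ordered set is again well-ordered, by means of an order-preserving ``padding'' of each short word to a word of length exactly $n$ over an enlarged alphabet. First I would enlarge $X$ to $X^{+} = X \cup \{\omega\}$, where $\omega$ is a fresh symbol declared to be strictly larger than every letter of $X$. Since $X$ is well-ordered and we have only appended a top element, $X^{+}$ is again well-ordered: any nonempty subset either meets $X$, in which case its least element is the $X$-minimum of that intersection, or equals $\{\omega\}$.

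Next, to each word $u \in \langle X\rangle$ with $|u| \leq n$ I would associate the length-$n$ word $\bar u := u\,\omega^{\,n-|u|}$ over $X^{+}$; since $\omega \notin X$, the original word $u$ is recovered by deleting the trailing $\omega$'s, so $u \mapsto \bar u$ is injective into $(X^{+})^{n}$. The heart of the argument is to check that this map is an order embedding of $(\{u : |u|\le n\},\, \leq_{\lex})$ into $(X^{+})^{n}$ equipped with the ordinary position-by-position lexicographic order $\leq_{\lex}'$ on equal-length tuples (for which the prefix clause never intervenes). Because both orders are total and the map is injective, it suffices to verify the single implication $u <_{\lex} v \Rightarrow \bar u <_{\lex}' \bar v$. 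I would do this by inspecting the two clauses in the definition of $<_{\lex}$: if $v$ is a proper prefix of $u$, then $\bar u$ and $\bar v$ agree up to position $|v|$, while at position $|v|+1\ (\leq n)$ the tuple $\bar u$ carries a genuine letter of $X$ and $\bar v$ carries $\omega$, so $\bar u <_{\lex}' \bar v$; if instead $u = rxs$ and $v = ryt$ with $x < y$ in $X$, then $\bar u$ and $\bar v$ first differ at position $|r|+1$, where they carry $x$ and $y$ respectively, again giving $\bar u <_{\lex}' \bar v$.

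Finally I would invoke the standard fact that, for the well-ordered set $X^{+}$, the $n$-fold lexicographic power $(X^{+})^{n}$ is well-ordered (immediate by induction on $n$, using that the lexicographic product of two well orders is a well order), together with the elementary observations that any suborder of a well order is a well order and that an order embedding pulls a well order back to a well order. Composing these yields that the restriction of $\leq_{\lex}$ to words of length $\leq n$ is a well order, as claimed.

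The step I expect to require the most care is verifying the order embedding, and in particular getting the polarity of the padding symbol right: the defining convention here makes a proper prefix \emph{larger}, so the padding letter must be a \emph{top} element $\omega$ (rather than a bottom element, as in the usual dictionary order) in order that a shorter word, padded with $\omega$'s, correctly comes out larger than its extensions.
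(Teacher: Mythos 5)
Your proof is correct, but it takes a genuinely different route from the paper's. The paper argues directly: it shows every nonincreasing sequence of words of degree at most $n$ stabilizes, by induction on $n$ --- first disposing of the empty word (which is the top element here), then peeling off the first letter of each word, using well-orderedness of $X$ to stabilize the initial letters and left-compatibility of $\leq_{\lex}$ with concatenation to reduce the tails to the case $n-1$. You instead reduce to the classical fact that a finite lexicographic power of a well order is a well order, via the padding embedding $u \mapsto u\,\omega^{\,n-|u|}$ into $(X^{+})^{n}$; your verification of the embedding is sound in both clauses, and you correctly identified the one trap, namely that the convention here makes a proper prefix \emph{larger}, so the padding symbol must be a top element rather than a bottom one. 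Your approach buys modularity --- all the work is pushed into standard facts about well orders and a routine case check --- while the paper's induction is self-contained and, by working with degree rather than length, needs no auxiliary alphabet. One small point you should make explicit: the lemma is stated for words of \emph{degree} at most $n$, whereas you prove the statement for words of \emph{length} at most $n$; since every letter has positive degree, $|u|\leq\deg(u)$, so the degree-$\leq n$ words form a subset of the length-$\leq n$ words and your conclusion restricts to them (this is exactly your ``suborder of a well order'' observation, but it deserves a sentence), and note that the same inequality is what makes the paper's inductive step on the tails legitimate, since deleting a letter drops the degree by at least one.
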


\begin{proof}
	It suffices to see every nonincreasing sequence  $u_1\geq_{\lex} u_2\geq_{\lex} \cdots$ of words of degree $\leq n$ stabilizes eventually. We prove it by induction on $n$. If $u_r\neq 1$ for some $r$ then $u_s\neq 1$ for $s\geq r$, so we may assume first that all $u_r$ are nonempty. Since $\leq_{\lex}$ coincides with $\leq$  on $X$, the result is true for $n=1$. For $n\geq 2$, consider the decompositions $u_r=v_rw_r$ for $r\geq 1$, where $v_r\in X$. Clearly, $$v_1\geq v_2\geq v_3\geq \cdots$$
	by the definition of $\leq_{\lex}$. Since $\leq$ is a well order on $X$, there exists $p\geq 1$ such that $v_p=v_{p+1}=\cdots.$ So we get a nonincreasing sequence $w_p\geq_{\lex} w_{p+1}\geq_{\lex}  \cdots$ of words of degree $\leq n-1$. By induction, there exists $q\geq p$ such that $w_q= w_{q+1}=\cdots$, hence  $u_q= u_{q+1}=\cdots$. This completes the proof.
\end{proof} 

\begin{definition}
	A word $u\in \langle X\rangle$ is called \textit{Lyndon}  if $u$ is nonempty and $u>_{\rm lex} wv$ for every factorization $u=vw$ with $v,w$ nonempty. The set of all Lyndon words on $X$ is denoted by 
	$\L=\mathbb{L}(X)$.
\end{definition}

For a word  $u$ of length $\geq2$, define   $u_R\in \langle X\rangle $ to be  the lexicographically largest proper suffix of $u$ and define  $u_L\in \langle X\rangle $ by the decomposition $u=u_Lu_R$.  The pair of words
$$\sh(u):=(u_L,u_R)$$ is called  the {\em Shirshov factorization} of $u$. As an example,  $\sh(x_2^2x_1x_2x_1)=(x_2^2x_1,x_2x_1)$.

Lyndon words enjoy many excellent combinatorial properties, we collect below some of them for reader's interests. Due to the different flavors of the definition of the lexicographic order in literature, the  statements that we present are adjusted accordingly from the given references.

\begin{proposition}\label{fact-Lyndon} \hfill
	\begin{enumerate}
		\item[(L1)] (\cite[Proposition 5.1.2]{Lo}, \cite[Lemma 2]{Kh}) A nonempty word $u$ is Lyndon if and only if it is lexicographically greater than any one of  its proper nonempty suffixes.
		\item[(L2)]  (\cite[Proposition 5.1.3]{Lo} $\&$ \cite[Lemma 4.3 (3)]{GF}) Let $u=w_1w_2,\, v=w_2w_3$ be Lyndon words. If $u >_{\rm lex} v$ (which holds in priori when $w_2\neq1$), then $w_1w_2w_3$ is a Lyndon word.
		\item[(L3)] (\cite[Proposition 5.1.3]{Lo}) Let $u$ be word of length $\geq2$. Then $u$ is a Lyndon word if and only if $u_L$ and $u_R$ are both Lyndon words and $u_L>_{\rm lex} u_R$.
		\item[(L4)](\cite[Proposition 5.1.4]{Lo}) Let $u,v$ be Lyndon words. Then $\sh(uv)=(u,v)$ if and only if either $u$ is a letter or  $u$ is of length $\geq 2$ with $u_R\leq_{\rm lex} v$.
		\item[(L5)](\cite[Theorem 5.1.5]{Lo}) Every nonempty word $u$ can be written uniquely as a (lexicographically) nondecreasing product $u=u_1u_2\cdots u_r$ of Lyndon words.
		\item[(L6)] (\cite[Lemma 4.5]{GF}) Let $u=u_1u_2\cdots u_r$ be a nondecreasing product of Lyndon wordsx. If $v$ is a Lyndon factor of $u$ then $v$ is a factor of some $u_i$. \hfill $\Box$
	\end{enumerate}
\end{proposition}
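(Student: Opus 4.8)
The six properties are classical, and I would establish them in the logical order (L1) $\Rightarrow$ (L2) $\Rightarrow$ (L5) $\Rightarrow$ (L3) $\Rightarrow$ (L4) $\Rightarrow$ (L6), with each step feeding the next. The foundational observation is (L1), which I would prove directly from the definition of $\leq_\lex$. Given a Lyndon word $u$ and a proper nonempty suffix $w$, write $u=vw$ with $v$ nonempty. The Lyndon condition gives $u>_\lex wv$, while $w$ being a proper prefix of $wv$ gives $w>_\lex wv$ by the prefix clause of the definition. The point is to compare $u$ with $w$: if $w$ is not a prefix of $u$, then $u$ and $w$ first disagree at a position $j\le|w|$, and since $wv$ agrees with $w$ up to position $|w|$, the comparison of $u$ against $w$ and against $wv$ is decided by the same letter at position $j$; hence $u>_\lex wv$ forces $u>_\lex w$. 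The remaining case, in which $w$ is simultaneously a prefix and a suffix (a border) of $u$, is handled by induction on $|u|$. The converse direction of (L1) is easier: if $u$ dominates all its proper nonempty suffixes, then for any factorization $u=vw$ one has $u>_\lex w>_\lex wv$, the last inequality again by the prefix clause, so $u$ is Lyndon.

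Building on (L1), I would prove (L2) by verifying that $w_1w_2w_3$ exceeds each of its proper nonempty suffixes. Such a suffix is either a suffix of $v=w_2w_3$, in which case it is dominated by $v$ and the comparison against $w_1w_2w_3$ is resolved through the common prefix together with the hypothesis $u>_\lex v$, or it has the form $sw_3$ with $s$ a proper nonempty suffix of $w_1w_2$; one separates the two cases according to whether the suffix is longer or shorter than $v$, and in each the required inequality reduces to (L1) applied to $u$ or to $v$ combined with $u>_\lex v$. The parenthetical assertion that $u>_\lex v$ holds automatically when $w_2\ne1$ follows because $w_2$ is then a suffix of $u$ and a prefix of $v$, so (L1) gives $u>_\lex w_2\ge_\lex v$, the degenerate cases $w_1=1$ or $w_3=1$ being immediate.

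With (L1) and (L2) in hand, the Chen--Fox--Lyndon factorization (L5) would be obtained as usual: existence by an inductive splitting argument (peeling off the appropriate extremal Lyndon prefix and checking that the resulting blocks are nondecreasing), and uniqueness by arguing that in any nondecreasing Lyndon factorization the first factor is canonically determined, using (L1) to exclude a strictly longer or shorter initial block. The Shirshov characterization (L3) is the technical heart: one direction uses that $u_R$, being the lexicographically largest proper suffix, is itself Lyndon by (L1), that $u_L$ is Lyndon, and that $u_L>_\lex u_R$; the converse reconstructs $u$ from Lyndon words $u_L>_\lex u_R$ via (L2) and then checks that the largest proper suffix of the product is exactly $u_R$. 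Statement (L4) is then a direct analysis of $\sh(uv)$ for Lyndon $u>_\lex v$ via (L3) and the defining maximality of the right Shirshov factor, and (L6) follows from (L5) by locating a Lyndon factor within the nondecreasing decomposition and invoking (L3) and (L2) to confine it to a single block.

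The main obstacle I anticipate is the interplay of prefix and suffix comparisons forced by the prefix clause of $\leq_\lex$, which makes this order \emph{not} a well order on all of $\langle X\rangle$ but only on words of bounded length, as recorded in Lemma \ref{lex-stablize}. The border cases in (L1) and the maximality bookkeeping for $u_R$ in (L3)--(L4) are exactly where convention slips and sign errors are most likely; these are precisely the places where I would lean on induction on word length and on the already-established (L1).
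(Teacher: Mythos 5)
The paper gives no proof of Proposition \ref{fact-Lyndon} at all: it is an explicit compendium of classical facts, each item carrying its citation (Lothaire, Kharchenko, Gateva-Ivanova--Fl{\o}ystad), and the surrounding text only warns that the statements have been adjusted to the paper's convention for $\leq_{\lex}$, under which a proper prefix is lexicographically \emph{larger} than the word it begins. Your proposal, reconstructing full proofs in the order (L1) $\Rightarrow$ (L2) $\Rightarrow$ (L5) $\Rightarrow$ (L3) $\Rightarrow$ (L4) $\Rightarrow$ (L6), is therefore by necessity a different route: it buys self-containedness and forces the convention-dependent bookkeeping into the open (which you rightly identify as the danger zone), at the cost of redoing material the paper deliberately outsources. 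The skeleton is the standard one and sound: the suffix criterion (L1) as the engine, (L2) by checking suffixes, the Chen--Fox--Lyndon factorization (L5), (L3)--(L4) via the maximality of $u_R$, and (L6) by a straddling argument (though the classical proof of (L6) leans on (L1) and the ordering of the blocks rather than on (L3) and (L2) as you suggest).

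One step, as written, points in the wrong direction rather than being merely terse: in the border case of (L1), where the proper nonempty suffix $w$ is also a prefix of $u$, you say the comparison ``is handled by induction on $|u|$,'' as if $u>_{\lex}w$ could still be established there. Under the paper's prefix clause this is impossible: $w$ a proper prefix of $u$ forces $u<_{\lex}w$ outright. The only way (L1) can hold is that this case is vacuous, i.e.\ Lyndon words are unbordered, so any induction must be aimed at a contradiction, not at the inequality. A direct argument avoids induction altogether: writing $u=wt=vw$ with $t,v$ nonempty of equal length, the two rotations $wv$ and $tw$ give, via the Lyndon property and left-compatibility of $\leq_{\lex}$ (cancelling the common prefix), both $t>_{\lex}v$ and $v>_{\lex}t$ unless $t=v$; and $t=v$ makes $w$ and $t$ commute, so $u$ is a proper power, which is never Lyndon since rotating by one period returns $u$ itself. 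A smaller caution applies to (L3): that $u_L$ is Lyndon is the substantive half of the forward direction, which your text merely asserts, while your closing ``check that the largest proper suffix of the product is exactly $u_R$'' is not needed for (L3) as stated --- $u_R$ is the largest proper suffix by definition --- though it is precisely the content required for (L4).
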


Let $k\langle X\rangle$ be the free algebra on $X$, which has all words on $X$ as a linear basis. Elements of $k\langle X\rangle $ are frequently  called (noncommutative) polynomials on $X$ (with coefficients in $k$). In the sequel, let us assume that  the free algebra $k\langle X\rangle$ is connected graded with each letter homogeneous of positive degree. The degree of a homogeneous polynomial $f$ is denoted by $\deg(f)$.  

We need some elementary notions from (noncommutative) Groebner basis theory. The \emph{graded lex order} on $\langle X\rangle$, denoted by $\leq_{\glex}$, is defined as follows. For words $u, \, v$, 
\begin{eqnarray*}
	\label{definition-deglex}
	u <_{\rm \glex} v\,\, \Longleftrightarrow \,\, \left\{
	\begin{array}{llll}
		\deg(u)<\deg(v), \quad \text{or} && \\
		\deg(u) =\deg(v)\, \text{ and } u<_{\lex} v.
	\end{array}\right.
\end{eqnarray*}
Clearly, it is an \emph{admissible order} on $\langle X\rangle$, i.e. a well order on $\langle X\rangle$ that is compatible with  concatenation of words from both sides.  The \emph{leading word} of a nonzero polynomial $f$, denoted by $\lw(f)$, is the largest word that occurs in $f$ (with respect to $\leq_{\glex}$). The coefficient of $\lw(f)$ in $f$ is called the \emph{leading coefficient} of $f$. For any set $F\subseteq k\langle X\rangle$, we write $\lw(F):=\{~\lw(f)~|~f\in F, \; f\neq 0~\}.$ 

Let $I$ be an ideal of $k\langle X\rangle$. A word on $X$ is called \emph{$I$-reducible}  if it is the leading word of some polynomials in $I$. A word that is not $I$-reducible is called \emph{$I$-irreducible}. An \emph{obstruction} of $I$ (with respect to $\leq_{\glex}$) is an $I$-reducible word whose proper factors are all $I$-irreducible. We denote by $\N_I$ the set of all $I$-irreducible Lyndon words, by $\mathcal{O}_I$ the set of all obstructions of $I$, and by $\B_I$ the set of all lexicographically nondecreasing product of Lyndon words in $\N_I$.

\begin{lemma}\label{Lyndon-obstruction}
	Let $I$ be an ideal of $k\langle X\rangle$. The set of all $I$-irreducible words is included in $\B_I$, and these two sets are equal if and only if $\mathcal{O}_I$ consists of Lyndon words.
\end{lemma}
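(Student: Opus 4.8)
The plan is to prove the two assertions in turn, using the unique Lyndon factorization (L5) and the factor-transfer property (L6), together with the admissibility of $\leq_{\glex}$. For the inclusion $\{I\text{-irreducible words}\} \subseteq \B_I$, I would take an $I$-irreducible word $u$ (the empty word lies in $\B_I$ as the empty product, so assume $u$ nonempty) and write its unique nondecreasing Lyndon factorization $u = u_1 \cdots u_r$ via (L5). The key observation is that each $u_i$ is itself $I$-irreducible: if some $u_i$ were the leading word of a polynomial $f \in I$, then, padding $f$ on both sides by the surrounding words and using that $\leq_{\glex}$ is admissible (so leading words multiply, whence $\lw(u_1\cdots u_{i-1}\, f\, u_{i+1}\cdots u_r) = u$), the word $u$ would be $I$-reducible, a contradiction. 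Hence each $u_i \in \N_I$ and $u \in \B_I$. The same padding argument shows more generally that every factor of an $I$-irreducible word is $I$-irreducible; I will record this for later use.

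Since the first part supplies one inclusion, the two sets coincide precisely when the reverse inclusion $\B_I \subseteq \{I\text{-irreducible words}\}$ holds, i.e. when every nondecreasing product of words in $\N_I$ is $I$-irreducible. For the direction in which $\mathcal{O}_I$ consists of Lyndon words, I would take $u = u_1\cdots u_r \in \B_I$ and suppose for contradiction that $u$ is $I$-reducible. Then a shortest $I$-reducible factor $w$ of $u$ is an obstruction, since by minimality its proper factors are all $I$-irreducible; hence $w$ is Lyndon by hypothesis. By (L6) this Lyndon factor of the nondecreasing product $u_1\cdots u_r$ must lie inside some $u_i \in \N_I$, contradicting the fact that factors of the $I$-irreducible word $u_i$ are themselves $I$-irreducible. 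Thus $u$ is $I$-irreducible and the reverse inclusion holds.

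For the converse, assume $\B_I = \{I\text{-irreducible words}\}$ and suppose some obstruction $w$ is not Lyndon. Then $|w|\geq 2$, and its Lyndon factorization $w = w_1\cdots w_s$ from (L5) must have $s \geq 2$ (otherwise $w = w_1$ would be Lyndon), so each $w_j$ is a proper factor of $w$. Since all proper factors of an obstruction are $I$-irreducible, every $w_j \in \N_I$, and therefore $w = w_1\cdots w_s \in \B_I$. But $w$ is $I$-reducible, contradicting the assumed equality. Hence $\mathcal{O}_I$ consists of Lyndon words, completing the equivalence.

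The routine ingredients are the unique factorization (L5) and the admissibility bookkeeping that transfers reducibility between a word and its factors. The decisive step, and the only place where the Lyndon combinatorics genuinely enters, is the appeal to (L6) in the sufficiency direction: it forces a hypothetical Lyndon obstruction to sit inside a single irreducible Lyndon generator, which is exactly what produces the contradiction. I therefore expect this to be the main point to get right, whereas the remaining arguments are formal consequences of the definitions of $\N_I$, $\mathcal{O}_I$ and $\B_I$.
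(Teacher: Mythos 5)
Your proof is correct and takes essentially the same route as the paper, whose own proof simply cites Proposition \ref{fact-Lyndon} (L5, L6) together with the fact that every $I$-reducible word has a factor in $\mathcal{O}_I$ --- your padding argument (admissibility of $\leq_{\glex}$ transferring reducibility between a word and its factors) and your shortest-reducible-factor observation are exactly the fleshing-out of those ingredients. One negligible caveat: in the converse direction, your claim $|w|\geq 2$ fails for the degenerate ideal $I=k\langle X\rangle$, where the empty word is the unique obstruction, but the contradiction persists there since the empty word lies in $\B_I$ as the empty product.
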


\begin{proof}
	It is an easy consequence of Proposition \ref{fact-Lyndon} (L5, L6) and the fact that every $I$-reducible word has a factor in $\mathcal{O}_I$.
\end{proof}

\section{Pseudo-comultiplications on connected graded  algebras}

The primary interest of this work is the structure of connected (graded) Hopf algebras.  The  techniques employed or  developed in this paper are aimed to deal with it. It turns out that the coassociativity of the comultiplication plays no role in some key observations. In this section, we study the structure of connected graded algebras $A$ which admit algebra homomorphisms $A\to A\otimes A$ that satisfy some features of the comultiplication of connected graded Hopf algebras. One of the main tools is the theory of Lyndon words reviewed in the previous section. 

Recall that a \emph{connected graded algebra} is an algebra $A$ on which a decomposition $A=\bigoplus_{n\geq 0}A_n$ is specified such that $A_0= k\, 1_A$ and $A_i\cdot A_j\subseteq A_{i+j}$ for $i,j\geq 0$. 
We write $A_+:= \bigoplus_{n\geq 1}A_n$ in the sequel.

\begin{definition}\label{definition-pseudo-comultiplication}
	Let $A=\bigoplus_{n\geq0} A_n$ be a connected graded algebra. A \emph{pseudo-comultiplication} on $A$ is a homomorphism of graded algebras $\Delta:A\to A\otimes A$ such that 
	\[
	\Delta(a)\in 1\otimes a+ a\otimes 1+(A_+\otimes A_+)_{\deg(a)} \label{TC} 
	\] 
	for every homogeneous element $a$ of positive degree. Clearly, to check the above condition it suffices to verify it on any family of homogeneous generators of $A$.
\end{definition}

The whole section is devoted to prove the following result. 

\begin{theorem}\label{generator-connected-graded-algebra}
	Let $A$ be a connected graded algebra and $B\subseteq A$ a graded subalgebra. Let  $\Delta:A\to A\otimes A$ be a pseudo-comultiplication on $A$ such that $\Delta(B) \subseteq B\otimes B$. 
	Then there is an indexed family $(z_\gamma)_{\gamma\in \Gamma}$ of homogeneous elements of $A$ of positive degree and a total order $\leq$ on $\Gamma$ such that
	\begin{enumerate}
		\item $\Delta(z_\gamma)  \in 1\otimes z_\gamma +z_\gamma\otimes 1 + (A_+^{<\gamma} \otimes A_+^{<\gamma})_{\deg(z_\gamma)}$ for every index  $\gamma \in \Gamma$,
		where $A^{<\gamma}$ is the subalgebra of $A$ generated by $B$ and $\{~ z_\delta~|~ \delta \in \Gamma,~ \delta<\gamma ~\}$;
		\item $[z_\gamma , A^{<\gamma}] \subseteq A^{<\gamma}$ for every index  $\gamma \in \Gamma$, where $[-,-]$ denotes the usual commutator;
		\item $\{~ z_{\gamma_1} \cdots z_{\gamma_n} ~|~ n\geq0,~ \gamma_1,\ldots, \gamma_n \in \Gamma, ~ \gamma_1\leq \cdots \leq \gamma_n ~\}$ is a homogeneous basis of $A$  as a left  graded $B$-module as well as a right graded $B$-module;
		\item  $\gkdim A = \gkdim B + \#(\Gamma)$, where $\#(\Gamma)$ is the number of elements of $\Gamma$.
	\end{enumerate}
\end{theorem}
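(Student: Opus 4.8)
The plan is to build a common Lyndon-word presentation of $A$ and $B$ and then read the family $(z_\gamma)$ directly off the irreducible Lyndon words of $A$ that genuinely involve the new generators. First I would pick a set $X$ of homogeneous generators of $A$ of positive degree with $X':=X\cap B$ a generating set of $B$, and well-order $X$ so that its restriction to $X'$ is the chosen order there; write $A=k\langle X\rangle/I$ and $B=k\langle X'\rangle/J$, so that $J=I\cap k\langle X'\rangle$ and both $I,J$ are homogeneous. The first point to secure is that the obstruction set $\mathcal{O}_I$ consists of Lyndon words: this is exactly where the pseudo-comultiplication enters, forcing $I$ to be controlled by relations with Lyndon leading words (the content I would reprove in the spirit of \cite[Proposition 2.4]{ZSL3}). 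Granting it, Lemma \ref{Lyndon-obstruction} identifies the $I$-irreducible words with $\B_I$, so the images of $\B_I$ form a homogeneous $k$-basis of $A$; the same applies to $B$ with $\N_J,\B_J$.

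For the generators themselves I would use the standard bracketing: to each Lyndon word $w$ one associates recursively, through the Shirshov factorization $\sh(w)=(w_L,w_R)$, an element $[w]=[[w_L],[w_R]]$ of $k\langle X\rangle$, homogeneous with $\lw([w])=w$ by Proposition \ref{fact-Lyndon} (L3). Putting $z_w$ for the image of $[w]$ in $A$, the nondecreasing products of the $z_w$ have leading words precisely $\B_I$ and hence form a basis of $A$, so $(z_w)_{w\in\N_I}$ generates $A$. The coalgebra behaviour is then obtained by induction along $\leq_{\glex}$: from the defining property of a pseudo-comultiplication one has $\Delta(z_w)-1\otimes z_w-z_w\otimes 1\in (A_+\otimes A_+)_{\deg w}$, and the task is to refine $A_+$ to $A_+^{<w}$, where $A^{<w}$ is generated by $B$ together with the $z_v$ for Lyndon $v<_{\lex}w$ (those with $v\in\langle X'\rangle$ already lying in $B$). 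This refinement rests on the triangularity of the comultiplication of bracketed Lyndon elements (classically, the duality with the shuffle/PBW picture), which expresses the middle terms through strictly smaller indices and through $B$.

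The decisive and most delicate step is the comparison $\N_J=\N_I\cap\langle X'\rangle$. The inclusion $\N_I\cap\langle X'\rangle\subseteq\N_J$ is immediate because $J\subseteq I$ makes every $I$-irreducible word $J$-irreducible. For the reverse inclusion one must show that reducing a word of $\langle X'\rangle$ modulo $I$ never leaves $\langle X'\rangle$, equivalently that an obstruction of $I$ lying in $\langle X'\rangle$ is already $J$-reducible; by Proposition \ref{fact-Lyndon} (L6) the Lyndon factors responsible for reducibility stay inside $\langle X'\rangle$, and here the hypothesis $\Delta(B)\subseteq B\otimes B$ is what guarantees that the relevant relations can be taken supported on $X'$. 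Setting $\Gamma:=\N_I\setminus\N_J=\N_I\setminus\langle X'\rangle$ with the restricted order $\leq_{\lex}$, conditions (1) and (2) follow from the coalgebra behaviour above together with the identification of $A^{<\gamma}$ as the span of nondecreasing products whose indices lie in $\N_J\cup\{\delta\in\Gamma:\delta<\gamma\}$, and the commutator condition (2) in turn lets one straighten products on either side, yielding the left and right $B$-module bases of (3). Finally (4) is a Hilbert-series computation: (3) gives a graded vector-space factorization of $A$ as (nondecreasing monomials in the $z_\gamma$) $\otimes\, B$, so the Hilbert series factor as a product, whence $\gkdim A=\gkdim B+\#(\Gamma)$ by the standard behaviour of GK dimension for such free decompositions.

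The main obstacle, as indicated, is the comparison $\N_J=\N_I\cap\langle X'\rangle$: a pure order manipulation cannot force a word of $\langle X'\rangle$ to remain in $\langle X'\rangle$ under reduction, because within a fixed degree a Lyndon word on $X'$ generally has lexicographic neighbours outside $\langle X'\rangle$; the equality must instead be extracted from the interaction of the Lyndon combinatorics of Proposition \ref{fact-Lyndon} with the coideal-type condition $\Delta(B)\subseteq B\otimes B$, and securing this cleanly is the heart of the argument.
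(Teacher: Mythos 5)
Your overall route coincides with the paper's: present $A=k\langle X\rangle/I$ and $B=k\langle X'\rangle/J$ with $J=I\cap k\langle X'\rangle$, set $z_\gamma=[\gamma]+I$ for $\gamma\in\Gamma=\N_I\setminus\N_J$ with the restricted lexicographic order, deduce (1) and (2) from the triangularity statements (Proposition \ref{quasi-Lie-imply-LK}, resting on \cite[Propositions 2.3, 2.4]{ZSL3}), and (3), (4) from the Lyndon-word bases and a growth count. But at the step you yourself single out as the heart of the matter you stop short, and your closing diagnosis of how that step must go is wrong. You assert that ``a pure order manipulation cannot force a word of $\langle X'\rangle$ to remain in $\langle X'\rangle$ under reduction'' and gesture at extracting $\N_J=\N_I\cap\langle X'\rangle$ from (L6) together with $\Delta(B)\subseteq B\otimes B$; but (L6) concerns Lyndon factors of a fixed word and says nothing about what rewriting modulo $I$ produces, so this does not yield the reverse inclusion. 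The paper's Lemma \ref{Lyndon-compare} gets the equality precisely by an order choice that your setup omits: the well order on $X$ is arranged so that \emph{every} letter of $X'$ precedes every letter of $X''=X\setminus X'$ (your weaker stipulation, that the order restrict to the given order on $X'$, does not impose this). Since a Lyndon word begins with its largest letter, with that order every Lyndon word lexicographically smaller than a Lyndon word on $X'$ already lies in $\langle X'\rangle$; then Proposition \ref{quasi-Lie-imply-LK}(2), applied to an $I$-reducible Lyndon word $v\in\langle X'\rangle$, rewrites $[v]$ modulo $I$ as a polynomial of degree $\deg(v)$ in brackets $[u]$ with $u\in\N_I$, $u<_{\lex}v$, hence supported on $X'$, producing an element of $J$ with leading word $v$, so $v$ is $J$-reducible. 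The hypothesis $\Delta(B)\subseteq B\otimes B$ enters only to make the lifted comultiplication restrict to a pseudo-comultiplication of $k\langle X'\rangle$ compatible with $J$, so that Proposition \ref{quasi-Lie-imply-LK} applies to the pair $(X',J)$ as well; it is not the engine of the comparison. Note also that the same order condition is what guarantees that every word of $\N_J$ is lexicographically below every $\gamma\in\Gamma$, which your identification of $A^{<\gamma}$ (needed for (1) and (2)) silently uses; without it, that identification fails too.

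A secondary but real gap is the right $B$-module half of (3), which you dispatch with ``the commutator condition (2) lets one straighten products on either side.'' Straightening gives left spanning cheaply, but moving coefficients of $B$ from left to right across $z_{\gamma_1}\cdots z_{\gamma_s}z_\gamma^p$ produces lower-order terms lying in $\sum_{i<p}A^{<\gamma}z_\gamma^i$ with \emph{arbitrary} elements of $A^{<\gamma}$, not basis monomials; the paper needs Lemma \ref{irreducible-Lyndon-subalgebra} (that $A^{<\gamma}$ is left-$B$-spanned by nondecreasing monomials with indices $<\gamma$), a bespoke well order $\leq_m$ on finite nondecreasing sequences (whose well-foundedness comes from Lemma \ref{lex-stablize}) to run the induction for right spanning, and a separate contradiction argument that converts a putative right $B$-relation into a left one, using left independence, to get right independence. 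Your Hilbert-series argument for (4) is fine in spirit, though the paper instead reads it off from $\gkdim A=\#(\N_I)$ and $\gkdim B=\#(\N_J)$ in Proposition \ref{quasi-Lie-imply-LK}(6), which also handles the case of infinite $\Gamma$ cleanly.
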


The proof of this theorem will be addressed  after some preparatory results. 

\begin{remark}
The totally ordered set $(\Gamma, \leq)$ desired in Theorem \ref{generator-connected-graded-algebra} may have no smallest element. Let $A$ be the universal enveloping algebra of the  Lie algebra with basis $\{e_n\}_{n\geq 1}$ and Lie bracket  $[e_i,e_j] = (j-i)e_{i+j}$. Then $A$ is a connected graded algebra with $e_n$ of degree $n$. Take $B$ to be the trivial subalgebra $k\cdot 1_A$. Then the family $(e_n)_{n\in \Gamma}$ with $\Gamma=\{1,2,3,\ldots\}$ and the total order $\leq$ on $\Gamma$ given by $1 > 2> 3 > \cdots$ satisfy the requirements. Clearly, in this case, $(\Gamma, \leq)$ has no smallest element.
\end{remark}

Let us proceed to show Theorem \ref{generator-connected-graded-algebra}. We continue to use the notations and conventions employed in Section \ref{subsection-Lyndon}. So $X$ stands for a well-ordered alphabet and the free algebra $k\langle X\rangle$ is connected graded with letters homogeneous of positive degrees. In addition to the Lyndon words, another brick to construct the required family of generators in Theorem \ref{generator-connected-graded-algebra} is the so-called \emph{standard bracketing} on $\langle X\rangle$, which is the map $[-]: \langle X\rangle \to k\langle X\rangle$ defined as follows.  First set $[1] =1$ and  $[x]:=x$ for  $x\in X$; and then for  words $u$ of length $\geq 2$, inductively  set 
\begin{eqnarray*}
	[u]=
	\left\{
	\begin{array}{ll}
		~ [[u_L], [u_R]], & u \text{ is Lyndon},  \\
		~ [u_L]  [u_R], &  u \text{ is not Lyndon}.
	\end{array}
	\right.
\end{eqnarray*}
Here, $[-,-]: k\langle X\rangle \times k\langle X\rangle \to k\langle X\rangle$ stands for the commutator operation on $k\langle X\rangle$ defined by $[f,g] = fg-gf.$ Note that $[u_1u_2\cdots u_n] = [u_1][u_2]\cdots [u_n]$ for Lyndon words $u_1\leq_{\rm lex}u_2\leq_{\rm lex} \cdots \leq_{\rm lex} u_n$.  

Recall from \cite[Definition 2.1]{ZSL3} that a (graded) \emph{triangular comultiplication} on $k\langle X\rangle$ is a homomorphism  $\Delta:k\langle X\rangle \to k\langle X\rangle \otimes k\langle X\rangle$ of graded algebras such that  
\[
\Delta(x)  ~ \in ~  1\otimes x + x\otimes 1 +   k\langle X_{<x} \rangle_+ \otimes k\langle X_{<x} \rangle_+ 
\]
 for each letter $x\in X$, where $X_{<x}:=\{~x'\in X~| ~ x'<x~\}$. It is clear that every triangular comultiplication on $k\langle X\rangle$ is a  pseudo-comultiplication on $k\langle X\rangle$. The converse does not hold. However,  if $x<x'$ follows necessarily from $\deg(x)<\deg(x')$ for each pair of letters $x, x'\in X$, then every pseudo-comultiplication on $k\langle X\rangle$ is a triangular comultiplication on $k\langle X\rangle$. 
For a word $w$ and an ideal $I$ of $k\langle X\rangle$, we denote by $k\langle X|I\rangle^{<w}$  (resp.  $k\langle X|I\rangle^{\leq w}$)  the subalgebra of $k\langle X\rangle$ generated by  
$$ \{~ [u]~|~ u\in \N_I, ~ u<_{\lex}  w ~\} \quad \Big (  \text{ resp. } \{~ [u]~|~ u\in \N_I, ~ u \leq_{\lex} w ~\}  ~ \Big).$$
Note that they are all graded subalgebras of $k\langle X\rangle$.


\begin{proposition}\label{quasi-Lie-imply-LK}
	Let $I$ be a homogeneous proper ideal of $k\langle X\rangle$ and $A:=k\langle X\rangle/ I$. Assume that there exists a  triangular comultiplication  $\Delta$ on $k\langle X\rangle$ such that $\Delta(I) \subseteq k\langle X\rangle \otimes I + I \otimes k\langle X\rangle$. 
	\begin{enumerate}
		\item $\B_I$ equals to the set of $I$-irreducible words, and  $\mathcal{O}_I$  consists of Lyndon words.
		\item $	[v] \in  k\langle X|I\rangle^{<v}_{\deg(v)}  +I$ for every $I$-reducible Lyndon words $v$.
		\item $	\Delta([v]) \in 1\otimes [v] + [v]\otimes 1 +\big((k\langle X|I\rangle_+^{<v}+I)\otimes (k\langle X|I\rangle_+^{<v}+I)\big)_{\deg(v)}$ for every $v\in \N_I$.
		\item $[u][v] -[v] [u] \in k\langle X|I\rangle_{\deg(uv)}^{\leq uv}  +I$ for every pair $u,v\in \N_I$ with $u>_{\lex} v$.
		\item $\{~ [w]+I\; |\; w\in \B_I ~ \}$ is a homogeneous basis of $A$.
		\item $\gkdim A=\#(\N_I)$, where $\#(\N_I)$ is the number of elements of $\N_I$.
	\end{enumerate}
\end{proposition}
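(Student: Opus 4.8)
The plan is to base everything on a single induction on degree, with the scaffolding fact that $\lw([u])=u$ with leading coefficient $1$ for every Lyndon word $u$. I would prove this by induction on $|u|$: with $\sh(u)=(u_L,u_R)$, property (L3) gives $u_L,u_R$ Lyndon and $u_L>_{\lex}u_R$, so $[u]=[[u_L],[u_R]]=[u_L][u_R]-[u_R][u_L]$ has $\lw([u_L][u_R])=u_Lu_R=u$ (leading words multiply under the admissible order $\leq_{\glex}$), while $\lw([u_R][u_L])=u_Ru_L<_{\lex}u$ because $u$ is Lyndon. Together with the identity $[w]=[w_1]\cdots[w_r]$ for the Lyndon factorization recorded in the text, this makes $\{[w]\mid w\in\langle X\rangle\}$ a linear basis of $k\langle X\rangle$ whose transition matrix to the word basis is unitriangular for $\leq_{\glex}$; all rewriting below takes place in this basis, and $\{[w]\mid w\in\B_I\}$ is the candidate basis of $A$.

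The technical core is to establish (2), (3) and (4) together by one induction on $n=\deg(\cdot)$, and I expect this to be the main obstacle. For $v\in\N_I$ the pseudo-comultiplication axiom already yields $\Delta([v])\in 1\otimes[v]+[v]\otimes1+(k\langle X\rangle_+\otimes k\langle X\rangle_+)_n$, so the real content of (3) is that the middle terms lie in the smaller piece $(k\langle X|I\rangle_+^{<v}+I)\otimes(k\langle X|I\rangle_+^{<v}+I)$. Since both legs of a middle term have degree $<n$, I would expand $\Delta([v])$ using that $\Delta$ is an algebra map together with $[v]=[[v_L],[v_R]]$, re-express each leg in the basis $\{[w]\}$, and invoke the lower-degree instance of (2) to absorb every $I$-reducible Lyndon constituent into strictly smaller irreducible ones modulo $I$. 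The delicate point is exactly the interface with (2): for $I$-reducible Lyndon $v$ one reduces $[v]\equiv[v]-f\pmod I$ (with $f\in I$, $\lw(f)=v$) to a combination of words $<_{\glex}v$ and passes to the $\{[w]\}$-basis, but word reduction alone can produce a bracket $[w]=[w_1]\cdots[w_r]$ one of whose factors satisfies $w_i>_{\lex}v$ (this is possible purely lexicographically, when $v$ lies between a product and its largest Lyndon factor). Ruling such terms out requires applying $\Delta$ to the congruence for $[v]$, matching it against the primitive-plus-lower shape furnished by (3), and using the coideal hypothesis $\Delta(I)\subseteq k\langle X\rangle\otimes I+I\otimes k\langle X\rangle$ to kill the offending coefficients. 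This coupling of (2) and (3) through $\Delta$ is the crux; its idea goes back to \cite{Kh,Uf} and was carried out in \cite[Proposition 2.4]{ZSL3}, which I would follow closely. Part (4) is then the same argument applied to $[u][v]-[v][u]$, whose leading word is the Lyndon word $uv$ (Lyndon by (L2) since $u>_{\lex}v$), yielding the bound $\leq_{\lex}uv$.

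Items (1) and (5) are two faces of one statement: by Lemma \ref{Lyndon-obstruction} the $I$-irreducible words coincide with $\B_I$ if and only if $\mathcal{O}_I$ consists of Lyndon words, and the first assertion is precisely that $\{[w]+I\mid w\in\B_I\}$ is a basis. Spanning is formal: for $w\notin\B_I$ some Lyndon factor is $I$-reducible, so (2) rewrites $[w]+I$ through brackets with strictly smaller $\glex$-leading word, a process that terminates by Lemma \ref{lex-stablize}, while (4) lets one straighten any product of the $[u]$, $u\in\N_I$, into nondecreasing order modulo $I$. For the independence---equivalently the nonexistence of a non-Lyndon obstruction $w=w_1\cdots w_r$ ($r\geq2$, all $w_i\in\N_I$)---I would again pass to the comultiplication: $\Delta([w])=\prod_i\Delta([w_i])$ is computed from (3), and comparing it with the coideal requirement $\Delta(I)\subseteq k\langle X\rangle\otimes I+I\otimes k\langle X\rangle$ shows that no $g\in I$ can have $\lw(g)=w$. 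Hence $\mathcal{O}_I$ is Lyndon, $\B_I$ is exactly the set of $I$-irreducible words, and (5) follows from the unitriangularity established in the first paragraph.

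Finally, (6) is a growth estimate from (5): the basis $\B_I$ consists of the nondecreasing words in the alphabet $\N_I$, so $A$ has the Hilbert series of the free commutative monoid on $\N_I$ graded by $\deg$. If $\#(\N_I)<\infty$ this is the Hilbert series of a graded polynomial algebra in $\#(\N_I)$ homogeneous variables, whence $\gkdim A=\#(\N_I)$; if $\N_I$ is infinite then for every $m$ the ordered monomials in $m$ chosen generators already force $\gkdim A\geq m$, so $\gkdim A=\infty=\#(\N_I)$.
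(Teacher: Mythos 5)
Your proposal is correct and takes essentially the same route as the paper: the crux---the intertwined induction that establishes (2) and (4) (and the basis statement (5)) by coupling the bracket reductions with the coideal condition $\Delta(I)\subseteq k\langle X\rangle\otimes I+I\otimes k\langle X\rangle$, in the Kharchenko--Ufer style---is precisely what the paper outsources to \cite[Propositions 2.3, 2.4]{ZSL3} and what you also declare you would follow, while your derivations of (1) from (5) via Lemma \ref{Lyndon-obstruction}, of (3) from the free-algebra coproduct estimate corrected by (2), and of (6) by comparing the Hilbert series of nondecreasing bracket monomials with $\prod_{v\in\N_I}\bigl(1-t^{\deg(v)}\bigr)^{-1}$ (with the finite-subset subalgebra argument when $\N_I$ is infinite) coincide with the paper's. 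The only cosmetic difference is that you fold (1) and (5) into a single equivalence and re-sketch the linear independence through $\Delta$, whereas the paper simply quotes (5) from \cite[Proposition 2.4]{ZSL3} and reads off (1).
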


\begin{proof}
	Parts (2), (4) and (5) are obtained in \cite[Proposition 2.4]{ZSL3}. By Part (5) and the fact that $\lw([u]) =u$ for any word $u$ on $X$, it is not hard to see that $\{~ w+I\; |\; w\in \B_I ~ \}$ is a basis of $A$.  By Lemma \ref{Lyndon-obstruction}, Part (1) follows.
	By \cite[Proposition 2.3]{ZSL3}, one has 
	\[
	\Delta([v]) \in 1\otimes [v] +[v] \otimes 1+ (k\langle X\rangle_+^{<v} \otimes k\langle X\rangle_+^{<v})_{\deg(v)},
	\]
	where $k\langle X\rangle^{<v}$ is the subalgebra of $k\langle X\rangle$ generated by 
	$\{\,[u]\;|\; u<_{\lex}v, \, u\in \L ~\}.$ Note that 
	\[
	k\langle X|I\rangle_+^{<v} +I= k\langle X\rangle_+^{<v}+I
	\]
	by Part (2), one then obtains Part  (3). Finally we show Part (6). For any finite subset $\Xi$ of $\mathcal{N}_I$, let $A_\Xi$ be the subalgebra of $A$  generated by $\{~ [v]+I ~|~ v \in \Xi~\}$. For any integer $n$, let $d_\Xi(n)$ be the number of nondecreasing sequences $(v_1,\ldots, v_p)$ in $\Xi$ such that  $\deg(v_1 \cdots v_p)=n$. Then
	\begin{equation*}
		\sum_{n\geq0} \dim(A_{\Xi})_n ~ t^n \geq \sum_{n\geq 0} d_\Xi(n) ~ t^n= \prod_{v \in \Xi} \big (1-t^{\deg (	v)}  \big )^{-1}
	\end{equation*}
	by Part (5) and a simple combinatorial argument, where  the inequality means that the difference of the series  has no negative coefficients. Consequently,
	\begin{equation}\label{Hilbert-GKdim}
		\gkdim(A_\Xi) = \limsup_{n\to \infty} \log_n \sum_{i\leq n} \dim(A_\Xi)_i \geq \limsup_{n\to \infty} \log_n \sum_{i\leq n} d_\Xi(i)= \#(\Xi). \tag{$*$}
	\end{equation}
	Here, the first equality is by \cite[Lemma 6.1 (b)]{KL} and the second equality is by \cite[Proposition 2.21]{ATV2}.
	It follows that if $\mathcal{N}_I$ is infinite then $\gkdim(A) \geq \gkdim(A_\Xi) \geq \#(\Xi)$ for any finite subset $\Xi$ of $\mathcal{N}_I$ and hence  $\gkdim(A) =\infty$. If $\mathcal{N}_I$ is finite then $A=A_{\mathcal{N}_I}$ and the inequality in ($*$)  becomes an equality for $\Xi=\mathcal{N}_I$ by Part (5), whence $\gkdim(A) = \gkdim(A_{\mathcal{N}_I}) = \#(\mathcal{N}_I)$.
\end{proof}

\begin{remark}
	Connected graded algebras of the form $A=k\langle X\rangle/ I$ with  $\mathcal{O}_I$ consisting of Lyndon words are studied in  \cite{GF2,Zhou,ZL1}, where some important homological and algebraic invariants of $A$ are described in terms of $\N_I$. One interesting source of such algebras are Artin-Schelter regular algebras of global dimension  $\leq5$ that satisfy some extra conditions \cite[Theorem 8.1]{ZL}. Proposition \ref{quasi-Lie-imply-LK} provides another natural source of such algebras, including all connected graded Hopf algebras. 
\end{remark}

\begin{lemma}\label{irreducible-Lyndon-subalgebra}
	Let $I$ be a homogeneous ideal of $k\langle X\rangle$ and $A:=k\langle X\rangle/ I$. Assume that there exists a  triangular comultiplication  $\Delta$ on $k\langle X\rangle$ such that $\Delta(I) \subseteq k\langle X\rangle \otimes I + I \otimes k\langle X\rangle$. 
	Then for every sequence $(w_1,\ldots, w_m)$ of Lyndon words in $\N_I$, the coset  $[w_1]\cdots [w_m] +I$ in $A$  is a linear combination of cosets of the form $[v_1]\cdots [v_p] +I$ with $v_1,\ldots, v_p\in \N_I$ satisfying
	\[
	v_1 \leq_{\lex} \cdots \leq_{\lex} v_p\leq_{\lex} \max\{w_1,\ldots, w_m\} \quad \text{and} \quad\deg(v_1\cdots v_p) = \deg(w_1\cdots w_m), 
	\]
	where $\max\{w_1,\ldots, w_m\}$ is the lexicographically maximal word among $w_1,\ldots, w_m$.
\end{lemma}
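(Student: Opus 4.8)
The plan is to prove the statement by a straightening (bubble-sort) procedure driven by the commutation relation of Proposition~\ref{quasi-Lie-imply-LK}(4). First I would dispose of the degree condition, which is automatic: the coset $[w_1]\cdots[w_m]+I$ is homogeneous of degree $d:=\deg(w_1\cdots w_m)$, and since the basis $\{\,[w]+I\mid w\in\B_I\,\}$ of Proposition~\ref{quasi-Lie-imply-LK}(5) is homogeneous, every nondecreasing product occurring in its expansion must already have degree $d$. Writing $M:=\max\{w_1,\ldots,w_m\}$, the genuine content is thus to rewrite $[w_1]\cdots[w_m]$ modulo $I$ as a $k$-linear combination of \emph{nondecreasing} products $[v_1]\cdots[v_p]$ with all $v_j\in\N_I$ and $v_j\leq_{\lex}M$; expressibility in the basis is free, so the real work is tracking the bound $\leq_{\lex}M$.

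The basic move is: whenever there is a descent $w_i>_{\lex}w_{i+1}$, apply Proposition~\ref{quasi-Lie-imply-LK}(4) with $u=w_i$, $v=w_{i+1}$ to obtain, modulo $I$,
\[
[w_1]\cdots[w_m]\ \equiv\ [w_1]\cdots[w_{i-1}][w_{i+1}][w_i][w_{i+2}]\cdots[w_m]\ +\ [w_1]\cdots[w_{i-1}]\,R\,[w_{i+2}]\cdots[w_m],
\]
where $R\in k\langle X|I\rangle^{\leq w_iw_{i+1}}_{\deg(w_iw_{i+1})}$ is a combination of products $[t_1]\cdots[t_s]$ with each $t_j\in\N_I$ and $t_j\leq_{\lex}w_iw_{i+1}$. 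I would first check that the max-word bound is preserved: since $w_i$ is a proper prefix of $w_iw_{i+1}$, we have $w_iw_{i+1}<_{\lex}w_i\leq_{\lex}M$, so every $t_j\leq_{\lex}w_iw_{i+1}<_{\lex}M$, while $w_{i+1},w_i\leq_{\lex}M$ by the choice of $M$. Hence all bracketed words appearing on the right are $\leq_{\lex}M$, and this persists through the recursion.

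For termination I would order sequences by the pair $(W,m)$, where $W:=w_1\cdots w_m$ is the concatenation (a degree-$d$ word) and $m$ is the number of factors, comparing $W$ first via $\leq_{\glex}$ and $m$ second; by Lemma~\ref{lex-stablize} the restriction of $\leq_{\glex}$ to degree-$d$ words is a well order, so the lexicographic product is well founded. The swapped term has concatenation $w_1\cdots w_{i-1}w_{i+1}w_iw_{i+2}\cdots w_m$; since $w_iw_{i+1}$ is Lyndon (Proposition~\ref{fact-Lyndon}(L2)) it is strictly larger than its rotation $w_{i+1}w_i$ by the definition of Lyndon words, and prepending/appending the remaining factors preserves this strict inequality, so $W$ strictly decreases. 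For a correction product $[t_1]\cdots[t_s]$ the leading word is $t_1\cdots t_s$; a degree count (all letters have positive degree) forbids $w_iw_{i+1}$ from being a proper prefix of $t_1$, and $t_1\leq_{\lex}w_iw_{i+1}$ forbids the reverse, so unless $t_1=w_iw_{i+1}$ (which forces $s=1$) the first letter where $t_1$ and $w_iw_{i+1}$ differ already gives $t_1\cdots t_s<_{\lex}w_iw_{i+1}$, hence strictly smaller $W$. The only surviving term with $W$ unchanged is the single bracket $[w_iw_{i+1}]$ (occurring only when $w_iw_{i+1}\in\N_I$), and there the factor count drops from $m$ to $m-1$. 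Thus every produced term is strictly smaller in the $(W,m)$ order, while keeping all Lyndon factors $\leq_{\lex}M$.

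It then remains to run the Noetherian induction on $(W,m)$: if the sequence is already nondecreasing it is itself of the required form (its maximal factor is $w_m=M\leq_{\lex}M$); otherwise choose a descent, rewrite as above, and apply the induction hypothesis to each resulting term, each of whose maximal factor is $\leq_{\lex}M$. I expect the main obstacle to be precisely the coincidence of leading words between $[w_i][w_{i+1}]$ and $[w_iw_{i+1}]$: an induction on $W$ alone breaks down there, and the crux is the prefix-and-degree analysis showing that the \emph{only} correction product able to retain the leading word $W$ is this single merged bracket, which strictly lowers the factor count. The secondary statistic $m$, together with that analysis, is exactly what restores well-foundedness.
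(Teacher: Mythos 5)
Your argument is correct and is essentially the paper's proof: the same straightening of a descent $w_i>_{\lex}w_{i+1}$ via Proposition~\ref{quasi-Lie-imply-LK}(4), with the max-bound preserved for exactly your reason, namely that every correction factor satisfies $t_j\leq_{\lex}w_iw_{i+1}<_{\lex}w_i\leq_{\lex}M$. The only divergence is the well-founded measure: instead of your pair (concatenation under $\leq_{\glex}$, number of factors), the paper orders the index sequences themselves entry-by-entry at fixed total degree (first differing entry compared under $\leq_{\lex}$, with the descending chain condition supplied by Lemma~\ref{lex-stablize}), under which the swapped term and every correction term --- including the merged bracket $[w_iw_{i+1}]$, since its first entry satisfies $w_iw_{i+1}<_{\lex}w_i$ --- drop strictly in one stroke, so the secondary statistic $m$ and your prefix-versus-degree analysis of $t_1$ become unnecessary.
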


\begin{proof}
	Let $N_I^{(\infty)}$ be the set of  finite sequences  in $\N_I$, including the empty sequence. Let  $\leq$ be the partial order on $\N_I^{(\infty)}$ defined as follows.  For $ a=(a_1,\ldots, a_m)$ and $b=(b_1,\ldots, b_n)$ in $\N_I^{(\infty)}$,  $a< b$ if and only if  $\sum_{i=1}^m\deg(a_i) = \sum_{j=1}^n\deg(b_j)$ but  there  exists an integer $p\leq \min\{m,n\}$ such that $a_i=b_i$ for $i<p$ and $a_p<_{\rm lex} b_p$.  Clearly, $\leq$ is  compatible with the left and right concatenation of finite sequences. In addition, $\leq$ satisfies the descending chain condition by Lemma \ref{lex-stablize}. 
	Now we proceed to show the result by induction on $(w_1,\ldots, w_m)$ with respect to $\leq$. If $(w_1,\ldots, w_m)$ is nondecreasing, there is nothing to prove. Otherwise, assume $w_{i}>_{\rm lex} w_{i+1}$ for some $i$. By Proposition \ref{quasi-Lie-imply-LK} (4),  
	\begin{eqnarray*}
		[w_1]\cdots [w_m]
		&\in&  [w_1]\cdots [w_{i-1}] [w_{i+1}][w_i] [w_{i+2}] \cdots [w_m] \\
		&& + \sum_{(c_1,\cdots, c_s)} k \cdot [w_1]\cdots [w_{i-1}] [c_1]\cdots [c_s][w_{i+2}] \cdots [w_m] + I,
	\end{eqnarray*}
	where $(c_1,\ldots, c_s)$ runs over finite sequences in $\N_I$ such that
	\[
	\deg(c_1\cdots c_s) = \deg(w_iw_{i+1}) \quad \text{ and } \quad \max\{c_1,\ldots, c_s\}   \leq_{\lex} w_iw_{i+1} <_{\lex}  \max
	\{w_1,\ldots, w_m\}.
	\] 
	It follows that $\max\{w_1,\ldots, w_{i-1}, c_1, \ldots, c_s, w_{i+2},\ldots, w_m\} \leq_{\lex} \max \{ w_1,\ldots, w_m\}$ and 
	\begin{eqnarray*}
		(w_1,\ldots, w_{i-1}, c_1, \ldots, c_s, w_{i+2},\ldots, w_m)  < (w_1,\ldots, w_m).
	\end{eqnarray*} 
	Also we have $$(w_1,\ldots, w_{i-1}, w_{i+1}, w_i, w_{i+2},\ldots, w_m) < (w_1,\ldots, w_m).$$
	By the induction hypothesis, $[w_1]\cdots [w_m]+I$ has the required decomposition.
\end{proof}

The following  result is a crucial observation for the proof of Theorem \ref{generator-connected-graded-algebra}. First let us recall more terminology from standard Groebner basis theory. A set $F\subseteq k\langle X\rangle$ is called \emph{reduced} (with respect to $\leq_{\glex}$) if $0\not\in F$ and for every $f\in F$, the leading coefficient of $f$ is $1$ and all words occurring in $f$ have no factor in $\lw(F\backslash\{f\})$.
For an ideal $I$  of $k\langle X\rangle$, a \emph{Groebner basis} of $I$ (with respect to $\leq_{\glex}$) is a subset $\mathcal{G} \subseteq I$ such that every $I$-reducible word has a factor in $\lw(\mathcal{G})$. It is well-known that every ideal admits a unique reduced Groebner basis \cite[Proposition 14]{Nord}. Note that the above-mentioned definitions may be formalized in the same way with respect to any admissible order on $\langle X\rangle$. Nevertheless, in this paper we only employ the graded lex order on $\langle X\rangle$ to apply the method of Groebner basis theory.

\begin{lemma}\label{Lyndon-compare}
	Let $I$ be a homogeneous proper ideal of $k\langle X\rangle$, $X'$ a subset of $X$ and $J= I\cap k\langle X'\rangle $. Assume  that $x_1<x_2$ for every $x_1\in X'$ and $x_2\in X\backslash X'$, and assume there is a triangular comultiplication $\Delta$ on $k\langle X\rangle$ such that $\Delta(k\langle X'\rangle) \subseteq k\langle X'\rangle \otimes k\langle X'\rangle$ and  $\Delta(I) \subseteq k\langle X\rangle \otimes I + I \otimes k\langle X\rangle$. Then 
	\begin{eqnarray*}
		\mathcal{N}_J= \mathcal{N}_I\cap \langle X'\rangle, \quad 
		\mathcal{O}_J= \mathcal{O}_I \cap \langle X'\rangle \quad \text{and} \quad 
		\mathcal{G}_J= \mathcal{G}_I\cap k\langle X'\rangle,
	\end{eqnarray*}
	where $\mathcal{G}_I$ (resp. $\mathcal{G}_J$) is the reduced Groebner basis of $I$ (resp. $J$) with respect to $\leq_{\glex}$.
\end{lemma}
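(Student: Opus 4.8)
The plan is to reduce all three equalities to a single \emph{elimination‑type} statement comparing normal forms modulo $I$ and modulo $J$, and then to prove that statement by induction along the well order $\leq_{\glex}$, feeding in the Lyndon structure supplied by Proposition \ref{quasi-Lie-imply-LK}. I first record two purely combinatorial consequences of the hypothesis that $X'$ is an \emph{initial segment} of $X$ (which is exactly what $x_1<x_2$ for $x_1\in X',\,x_2\in X\setminus X'$ says): (a) a Lyndon word lies in $\langle X'\rangle$ if and only if its first letter lies in $X'$ — indeed by Proposition \ref{fact-Lyndon}(L1) the first letter of a Lyndon word is its largest letter, so if it lies in the down‑closed set $X'$ then so do all its letters; and (b) consequently, if $u$ is Lyndon and $u\leq_{\lex}v$ for some $v\in\langle X'\rangle$, then $u\in\langle X'\rangle$, because $u\leq_{\lex}v$ forces the first letter of $u$ to be $\leq$ the first letter of $v\in X'$. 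I also note the easy inclusion: since $J\subseteq I$, every $I$-irreducible word of $\langle X'\rangle$ is $J$-irreducible, so $\N_I\cap\langle X'\rangle\subseteq\N_J$.

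The core step is the \emph{elimination property} (EP): for every $w\in\langle X'\rangle$, the unique $I$-reduced representative $\mathrm{nf}_I(w)$ of $w+I$ again lies in $k\langle X'\rangle$. I would prove EP by induction on $w$ along $\leq_{\glex}$. If $w$ is $I$-irreducible there is nothing to do. If $w$ is $I$-reducible but \emph{not} Lyndon, then by Proposition \ref{quasi-Lie-imply-LK}(1) it has a \emph{proper} factor $v\in\mathcal{O}_I$, which is Lyndon and lies in $\langle X'\rangle$; since $v<_{\glex}w$, the inductive hypothesis gives $\mathrm{nf}_I(v)\in k\langle X'\rangle$, so reducing this factor rewrites $w$ modulo $I$ as a combination of strictly $\glex$-smaller words still in $\langle X'\rangle$, and the inductive hypothesis closes the case. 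The decisive case is $w=v$ itself Lyndon and $I$-reducible: here Proposition \ref{quasi-Lie-imply-LK}(2) expresses $[v]$ modulo $I$ inside the subalgebra generated by the $[u]$ with $u\in\N_I$ and $u<_{\lex}v$; by (b) each such $u$ lies in $\langle X'\rangle$, and straightening these products by Lemma \ref{irreducible-Lyndon-subalgebra} rewrites $[v]$ modulo $I$ as a combination of brackets $[w']$ with $w'\in\B_I$ a nondecreasing product of Lyndon words all $<_{\lex}v$; a short computation ($w'<_{\lex}w'_1\leq\cdots<_{\lex}v$) shows every such $w'$ satisfies $w'<_{\glex}v$. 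Since $[v]=v+(\glex\text{-smaller words of }\langle X'\rangle)$, combining these expansions rewrites $v$ modulo $I$ as a combination of $\glex$-smaller words of $\langle X'\rangle$, and the inductive hypothesis yields $\mathrm{nf}_I(v)\in k\langle X'\rangle$. I expect this Lyndon case to be the main obstacle: the naive elimination property is simply \emph{false} for arbitrary leading words, and it is rescued only by the fact (Proposition \ref{quasi-Lie-imply-LK}(1)) that obstructions are Lyndon, which is precisely what lets (b) control the companion words.

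With EP in hand the rest is bookkeeping. For $w\in\langle X'\rangle$ one has $w-\mathrm{nf}_I(w)\in I\cap k\langle X'\rangle=J$, and the words of $\mathrm{nf}_I(w)$ are $I$-irreducible words of $\langle X'\rangle$, hence $J$-irreducible; by uniqueness of normal forms this gives $\mathrm{nf}_J=\mathrm{nf}_I$ on $k\langle X'\rangle$, whence a word of $\langle X'\rangle$ is $I$-irreducible if and only if it is $J$-irreducible (the \emph{Main Claim}). Intersecting with the Lyndon condition gives $\N_J=\N_I\cap\langle X'\rangle$; and since an obstruction is a reducible word all of whose proper factors are irreducible, while proper factors of an $\langle X'\rangle$-word remain in $\langle X'\rangle$, the Main Claim likewise yields $\mathcal{O}_J=\mathcal{O}_I\cap\langle X'\rangle$.

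Finally, for the reduced Gröbner bases I would use that a reduced Gröbner basis element is the \emph{unique} monic polynomial in the ideal whose leading word is a prescribed obstruction and whose remaining words are all irreducible (uniqueness because a nonzero combination of irreducible words never lies in the ideal). If $g\in\mathcal{G}_I\cap k\langle X'\rangle$, then $\lw(g)\in\mathcal{O}_I\cap\langle X'\rangle=\mathcal{O}_J$, $g\in I\cap k\langle X'\rangle=J$, and its tail words are $I$-irreducible elements of $\langle X'\rangle$, hence $J$-irreducible by the Main Claim; thus $g$ is $J$-reduced and therefore equals the reduced Gröbner basis element of $J$ with that leading word. The reverse inclusion is symmetric, identifying the reduced Gröbner basis element of $J$ at a given obstruction with the one of $I$ via the same uniqueness. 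This gives $\mathcal{G}_J=\mathcal{G}_I\cap k\langle X'\rangle$ and completes the proof.
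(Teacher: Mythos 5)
Your proposal is correct, but it is organized quite differently from the paper's proof, and the comparison is instructive. The paper proves $\N_J=\N_I\cap\langle X'\rangle$ directly: for an $I$-reducible Lyndon word $v\in\langle X'\rangle$, Proposition \ref{quasi-Lie-imply-LK}(2) writes $[v]$ modulo $I$ in the subalgebra generated by brackets $[u]$ with $u\in\N_I$, $u<_{\lex}v$, and the initial-segment observation (your (a)/(b)) places that companion polynomial in $k\langle X'\rangle$, so $[v]-f\in I\cap k\langle X'\rangle=J$ with leading word $v$ — one step, no induction. It then gets $\mathcal{O}_J=\mathcal{O}_I\cap\langle X'\rangle$ by applying Proposition \ref{quasi-Lie-imply-LK}(1) \emph{to the ideal $J$ of $k\langle X'\rangle$ as well}, which is where the hypothesis $\Delta(k\langle X'\rangle)\subseteq k\langle X'\rangle\otimes k\langle X'\rangle$ enters (it makes the restriction of $\Delta$ a pseudo-comultiplication on $k\langle X'\rangle$ with $\Delta(J)\subseteq k\langle X'\rangle\otimes J+J\otimes k\langle X'\rangle$, so that $\mathcal{O}_J$ is known a priori to consist of Lyndon words); the Gr\"obner equality then follows by the same uniqueness argument you use. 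You instead prove the stronger elimination property $\mathrm{nf}_I(w)\in k\langle X'\rangle$ for all $w\in\langle X'\rangle$ by induction along $\leq_{\glex}$ (legitimate even for infinite $X$ by Lemma \ref{lex-stablize} and homogeneity), deduce $\mathrm{nf}_J=\mathrm{nf}_I$ on $k\langle X'\rangle$, and harvest all three equalities uniformly from the resulting equivalence ``$I$-irreducible $\Leftrightarrow$ $J$-irreducible'' for \emph{all} words on $X'$, not just Lyndon ones. Two consequences of your packaging are worth noting: first, you never apply Proposition \ref{quasi-Lie-imply-LK} to $(k\langle X'\rangle,J)$, so the hypothesis $\Delta(k\langle X'\rangle)\subseteq k\langle X'\rangle\otimes k\langle X'\rangle$ is in fact unused in your argument, and the Lyndon-ness of $\mathcal{O}_J$ comes out as a corollary rather than an input — a mild sharpening of the lemma; second, you obtain the genuinely stronger statement that reduction modulo $I$ never leaves $k\langle X'\rangle$, which the paper's proof only yields implicitly. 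The price is length, plus one superfluous step: in your Lyndon case the appeal to Lemma \ref{irreducible-Lyndon-subalgebra} is unnecessary, since for any product with $u_1,\dots,u_m\in\N_I$, $u_i<_{\lex}v$, one already has $\lw([u_1]\cdots[u_m])=u_1\cdots u_m<_{\glex}v$ by the same first-difference comparison you carry out for the straightened products (the prefix alternative being excluded by degree), so every word of the companion polynomial from Proposition \ref{quasi-Lie-imply-LK}(2) is $<_{\glex}v$ without straightening.
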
 

\begin{proof}
	First we show $\mathcal{N}_J= \mathcal{N}_I\cap \langle X'\rangle$. Let $v$ be a Lyndon word on $X'$. Clearly, if $v$ is $J$-reducible then it is $I$-reducible. Conversely, assume $v$ is $I$-reducible. Since the biggest letter in a Lyndon word must occur in the left-most place, Lyndon words on $X$  lexicographically smaller than $v$ are necessarily Lyndon words on $X'$. It is then clear that $v$ is $J$-reducible  by Proposition \ref{quasi-Lie-imply-LK} (2). 
	
	Next we show $\mathcal{O}_J= \mathcal{O}_I \cap \langle X'\rangle$. By Proposition \ref{quasi-Lie-imply-LK} (1),  $\mathcal{O}_I$ and $\mathcal{O}_J$ consist of Lyndon words. Note that a word on $X$ (resp. $X'$) is $I$-irreducible (resp. $J$-irreducible) if and only if  its Lyndon factors are all so.  For any Lyndon word $u$ on $X'$, 
	\begin{eqnarray*}
		u \in \mathcal{O}_J
		&\Longleftrightarrow& u \text{ is $J$-reducible and its proper Lyndon factors are all $J$-irreducible}\\
		&\Longleftrightarrow& u \text{ is $I$-reducible and its proper Lyndon factors are all $I$-irreducible}\\
		&\Longleftrightarrow& u\in \mathcal{O}_I.
	\end{eqnarray*}
	Here, the first and the third equivalences are clear, and the second equivalence is by  $\mathcal{N}_J= \mathcal{N}_I\cap \langle X'\rangle$ shown above. The desired equality follows.
	
	Finally, we show $\mathcal{G}_J= \mathcal{G}_I\cap k\langle X'\rangle$. Since $\lw(\mathcal{G}_I)= \mathcal{O}_I$ and $\lw(\mathcal{G}_J)= \mathcal{O}_J$ by standard Groebner basis theory, they both consist of Lyndon words  and $\lw(\mathcal{G}_J) = \lw(\mathcal{G}_I) \cap \langle X'\rangle$. For any word $u\in \lw(\mathcal{G}_J)$, let $f_u$ (resp. $g_u$) be the unique element of $\mathcal{G}_J$ (resp. $\mathcal{G}_I$) with leading word $u$. By the definition of $\mathcal{G}_J$, all words that occur in $f_u-u$ have no factor in $\lw(\mathcal{G}_J)$, and hence they have no factor in $\lw(\mathcal{G}_I)$. Thus $f_u-u$ is a linear combination of $I$-irreducible words. Consequently,  $f_u=g_u$ for any $u\in \lw(\mathcal{G}_J)$  and the desired equality follows directly. 
\end{proof}

\begin{remark}
	For a proper ideal $I$ of $k\langle X\rangle$ and a subset $X' \subset X$, knowing a generating set $G$ for $I$, it is not necessarily true that $G'=G\cap k\langle X'\rangle$ is a generating set for $J=I\cap k\langle X'\rangle$. To make it happen, a well-known choice of $G$ is an arbitray Groebner basis of $I$ with respect to an arbitrary admissible order $\sigma$ on $\langle X\rangle$ that eliminates $X\backslash X'$ in the sense of \cite[Definition 17]{Nord}. Indeed, $G'$ is  a Groebner basis of $J$ with respect to the restriction of $\sigma$ to $\langle X'\rangle$ (see \cite[Proposition 19]{Nord}), hence $G'$ is a generating set of $J$. Nevertheless, Lemma \ref{Lyndon-compare} provides  more choices of $G$ when  $I$ satisfy specific conditions.
\end{remark}

Now we are ready to prove Theorem \ref{generator-connected-graded-algebra}.

\begin{proof}[Proof of Theorem \ref{generator-connected-graded-algebra}]
	By choosing  a set of homogeneous generators of $A$ of positive degree which contains a generating set of $B$, we may fix a set $X$ of graded variables, a decomposition $X=X'\cup X''$, and a homogeneous ideal $I$ of $k\langle X\rangle$ such that $A= k\langle X\rangle/I$ as graded algebras, the cosets of variables in $X'$ form a generating set of $B$ and the cosets of variables in $X''$ lie in $A\backslash B$. Further, equip $X$ with a well order $\leq$ as follows.  First fix a well order $\leq'_r$ on $X'_r:=\{~x\in X'~|~ \deg(x) =r ~\}$ and a well order $\leq''_r$ on $X''_r:=\{~x\in X''~|~ \deg(x) =r ~\}$ for each integer $r\geq 1$; then for $x_1,x_2\in X$,
	\begin{eqnarray*}
		\label{definition-deglex}
		x_1 \leq x_2\,\, \Longleftrightarrow \,\, \left\{
		\begin{array}{llll}
			x_1\in X' \text{ and } x_2\in X'', \quad \text{or} \\
			x_1, x_2\in X' \text{ and }	 \left\{
			\begin{array}{llll}
				\deg(x_1)<\deg(x_2), \quad \text{or} && \\
				\deg(x_1) =\deg(x_2)\, \text{ and } x_1\leq'_{\deg(x_1)} x_2,
			\end{array}\right. \text{or} \\
			x_1, x_2\in X'' \text{ and }	 \left\{
			\begin{array}{llll}
				\deg(x_1)<\deg(x_2), \quad \text{or} && \\
				\deg(x_1) =\deg(x_2)\, \text{ and } x_1\leq''_{\deg(x_1)} x_2,
			\end{array}\right.
		\end{array}\right.
	\end{eqnarray*}
Since $\Delta$ is a pseudo-comultiplication on $A$ such that $\Delta(B) \subseteq B\otimes B$,  one may readily lift $\Delta$ to a triangular comultiplication $\tilde{\Delta}: k\langle X\rangle \to k\langle X\rangle \otimes k\langle X\rangle$ such that 
	\[
	\tilde{\Delta}(k\langle X'\rangle) \subseteq k\langle X'\rangle \otimes k\langle X'\rangle \quad \text{ and }\quad \tilde{\Delta}(I) \subseteq k\langle X\rangle \otimes I + I \otimes k\langle X\rangle.
	\]
	
	Clearly, the canonical projection from $k\langle X\rangle$ to $A$ induces a graded algebra isomorphism $$k\langle X'\rangle/J\cong B, \quad \text{ where }J=I\cap k\langle X'\rangle.$$
	Let $\Gamma:= \mathcal{N}_I\backslash \mathcal{N}_J$ and define $\leq$ to be the restriction of $\leq_{\lex}$ to $\Gamma$. 
	For every $\gamma\in \Gamma$, let
	\[
	z_\gamma:= [\gamma]+I.
	\]  
	Since $\mathcal{N}_J = \mathcal{N}_I\cap \langle X'\rangle$ by Lemma \ref{Lyndon-compare},
	the left-most letter of every Lyndon word in $\Gamma$ must belong to $X''$. Therefore,
	every Lyndon word in $\mathcal{N}_J$ is lexicographically smaller than that in $\Gamma$. Note that $B$ is generated by $\{~ [v]+I~|~v\in \mathcal{N}_J~\}$, hence Parts (1) and (2) follow  by Proposition \ref{quasi-Lie-imply-LK} (3, 4). Compare the basis of $A$ and $B$ that constructed from $\mathcal{N}_I$ and $\mathcal{N}_J$ respectively as in Proposition \ref{quasi-Lie-imply-LK} (5), it is easy to see that $A$ is a free left $B$-module with a basis as the given set in Part (3). Since $\gkdim A=\# (\mathcal{N}_I)$ and $\gkdim B = \#(\mathcal{N}_J)$ by Proposition \ref{quasi-Lie-imply-LK} (6), Part (4) follows directly.
	
	Next we  show that  $A$ is generated  as a right $B$-module by the  set given in Part (3). For $m\geq 1$, let $\Gamma_{m}$ be the set of all nondecreasing finite sequences $(\gamma_1,\ldots, \gamma_q)$ in $\Gamma$  with $\sum_{i=1}^q \deg(z_{\gamma_i}) \leq m$. Clearly,
	\[
	A_m= \bigoplus_{(\gamma_1,\ldots, \gamma_q) \in \Gamma_m} A_m^{(\gamma_1,\ldots, \gamma_q)}, \text{ where } A_m^{(\gamma_1,\ldots, \gamma_q)}:= B_{m-\sum_{i=1}^q \deg(z_{\gamma_i})} \cdot( z_{\gamma_1}\cdots z_{\gamma_q}).
	\]
	Let  $\leq_m$ be the total order on $\Gamma_{m}$ defined as follows.  For $\overline{\gamma}= (\gamma_1,\ldots, \gamma_q)$ and $\overline{\delta}=(\delta_1,\ldots, \delta_p)$ in $\Gamma_{m}$,  $\overline{\gamma}<_m \overline{\delta}$ if and only if $\overline{\gamma}$ is a proper right part of $\overline{\delta}$ or  there  exists an integer $l\leq \min\{p,q\}$ such that $\gamma_{q-i}=\delta_{p-i}$ for $i<l$ and $\gamma_{q-l}< \delta_{p-l}$. By a similar argument as that in Lemma \ref{lex-stablize}, it is not hard to see $\leq_m$ is a well-order on $\Gamma_{m}$ with the empty sequence $\emptyset$ as the smallest element. Note that 
	\[
	A_m^\emptyset = B_m.
	\]
	For any sequence $\overline{\gamma}=(\gamma_1,\ldots, \gamma_s, {\small\overbrace{\gamma,\ldots, \gamma}^{p})} \in \Gamma_m $, where $\gamma_s<\gamma$ and $p\geq 1$, and for any $\lambda\in B_{m-\deg(z_{\gamma_1}\cdots z_{\gamma_s}z_\gamma^p)}$, one has by Part (2) and Lemma  \ref{irreducible-Lyndon-subalgebra} that 
	\begin{eqnarray*}
	[\lambda, z_{\gamma_1}\cdots z_{\gamma_s}z_\gamma^p] &=&   [\lambda, z_{\gamma_1}\cdots z_{\gamma_s}]z_\gamma^p + z_{\gamma_1}\cdots z_{\gamma_s}\cdot [\lambda,z_\gamma^p] \\ &\in&  [\lambda, z_{\gamma_1}\cdots z_{\gamma_s}]z_\gamma^p +  \sum_{i=0}^{p-1} A^{<\gamma}\cdot z_\gamma^i \\ &\subseteq &\sum\nolimits_{(\eta_{1},\ldots, \eta_l, {\tiny\overbrace{\gamma,\ldots, \gamma}^{p})})}A_m^{(\eta_1,\ldots, \eta_l, {\tiny\overbrace{\gamma,\ldots, \gamma}^{p})}}  + \sum\nolimits_{(\delta_{1},\ldots, \delta_t, {\tiny\overbrace{\gamma,\ldots, \gamma}^{i})})}A_m^{(\delta_1,\ldots, \delta_t, {\tiny\overbrace{\gamma,\ldots, \gamma}^{i})}},
	\end{eqnarray*}
	where $(\eta_1,\ldots, \eta_l)$ runs over elements in $\Gamma_{m-\deg(z_\gamma^p)}$ such that  $(\eta_1,\ldots, \eta_l)<_m $ $(\gamma_1,\ldots, \gamma_s)$, and where   $(\delta_1,\ldots, \delta_t, {\small\overbrace{\gamma,\ldots, \gamma}^{i})} $ runs over elements in $\Gamma_m$ with $i<p$ and $\delta_t<\gamma$. It follows that 
	\begin{eqnarray}\label{key formula}
		[\lambda, z_{\gamma_1}\cdots z_{\gamma_s}z_\gamma^p] \in \sum_{\overline{\delta}\in \Gamma_m,\, \overline{\delta}  <_m \overline{\gamma}} A_m^{\overline{\delta}},
	\end{eqnarray}
	because of
	\[
(\eta_1,\ldots, \eta_l, {\small\overbrace{\gamma,\ldots, \gamma}^{p})}, \quad 	(\delta_1,\ldots, \delta_t, {\small\overbrace{\gamma,\ldots, \gamma}^{i})} \, <_m \, \overline{\gamma}= (\gamma_1,\ldots, \gamma_s, {\small\overbrace{\gamma,\ldots, \gamma}^{p})}.
	\] 
By induction on $\Gamma_m$ with respect to $\leq_m$, the space $A_m^{(\gamma_1,\ldots, \gamma_q)}$ is contained in the span of the given set in Part (3) as a right $B$-module for all $(\gamma_1,\ldots, \gamma_q)\in \Gamma_m$, and therefore $A= \sum_{m\geq 0} A_m$ is too.
	
Finally, we show the given set in Part (3) is right linearly independent over $B$. Otherwise, there exist positive integers $m,\, n\geq 1$, indexes $\overline{\gamma(1)} <_m\cdots <_m \overline{\gamma(n)} \in \Gamma_m$ with $\overline{\gamma(i)}=(\gamma(i)_1,\ldots, \gamma(i)_{s(i)})$ and nonzero homogeneous elements $\lambda(1),\ldots, \lambda(n) \in B$ with $\deg(\lambda(i)) = m- \sum_{j=1}^{s(i)}\deg(z_{\gamma(i)_j})$ such that  
		\begin{equation*}\label{linear-relation}
		\sum _{i=1}^n( z_{\gamma(i)_1}\cdots z_{\gamma(i)_{s(i)}}) \cdot \lambda(i) =0.
	\end{equation*}
By the formula (\ref{key formula}) established in the previous paragraph, one has
\[
(z_{\gamma(i)_1}\cdots z_{\gamma(i)_{s(i)}}) \cdot \lambda(i) \in  \lambda(i) \cdot (z_{\gamma(i)_1}\cdots z_{\gamma(i)_{s(i)}}) 
+\sum_{\overline{\delta}\in \Gamma_m,\, \overline{\delta}<_m \overline{\gamma(i)}} A_m^{\overline{\delta}}, \quad \quad i=1,\cdots, n.
\]
It follows that 
\begin{eqnarray*}
0 &\in& 	\sum _{i=1}^n\lambda(i) \cdot ( z_{\gamma(i)_1}\cdots z_{\gamma(i)_{s(i)}}) +\sum_{\overline{\delta}\in \Gamma_m,\, \overline{\delta}<_m \overline{\gamma(n)}} A_m^{\overline{\delta}}  \\ &=& \lambda(n) \cdot ( z_{\gamma(n)_1}\cdots z_{\gamma(n)_{s(n)}}) +\sum_{\overline{\delta}\in \Gamma_m,\, \overline{\delta}<_m \overline{\gamma(n)}} A_m^{\overline{\delta}}.
\end{eqnarray*}
Since  the given set in Part (3) is left $B$-linearly independent, one may readily conclude that $\lambda(n)=0$, which is a contradiction. This completes the proof of Part (3).
\end{proof}

\section{Applications to connected (graded) Hopf algebras}

\label{section-Hopf-algebra}

In this section we study the structure of connected (graded) Hopf algebras.

Let us begin by recalling some notations and definitions on Hopf algebras.  For a general Hopf algebra $H$, the usual notations $\Delta_H$, $\varepsilon_H$ and $S_H$ are employed to denote  the comultiplication,  counit  and  antipode of $H$ respectively. The \emph{coradical} of a Hopf algebra $H$ is defined to be the sum of all simple subcoalgebras of $H$. It is denoted by $H_{(0)}$. Also,  the \emph{coradical filtration} of $H$ (\cite[Section 5.2]{Mont}) is denoted by $\{H_{(n)}\}_{n\geq0}$.  Note that the notations for coradical and coradical filtration we used  differ from that of \cite{Mont}. A Hopf algebra is called \emph{connected} if  its coradical  is  one-dimensional.

By a \emph{graded Hopf algebra} we mean a Hopf algebra $H$ equipped with a grading $H=\bigoplus_{n\geq 0}H_n$  such that $H$ is both a graded algebra and a graded coalgebra, and the antipode  preserves the given grading. 
Such a graded Hopf algebra is called \emph{connected} if $H_0=k$.  It is easy to show that, when $H$ is a connected graded Hopf algebra, one has $\ker \varepsilon_H= H_+:=\bigoplus_{n\geq 1}H_n$ and $$\Delta_H(x)\in x\otimes 1 + 1\otimes x + (H_+\otimes H_+)_n, \quad x\in H_n.$$
So the comultiplication of a connected graded Hopf algebra is a pseudo-comultiplication. Clearly connected graded Hopf algebras are connected Hopf algebras. For a connected Hopf algebra $H$, the coradical filtration $\{H_{(n)}\}_{n\geq 0}$ is a Hopf algebra filtration \cite[p. 62]{Mont}, whence
the associated graded space $\gr_c(H):= \bigoplus_{n\geq 0} H_{(n)}/H_{(n-1)}$ is a connected graded Hopf algebra in the natural way.

\begin{theorem}\label{generator-connected-graded-Hopf-algebra}
Let $H$ be a connected graded Hopf algebra and $K$ a graded Hopf subalgebra. Then there is an indexed family $(z_\gamma)_{\gamma\in \Gamma}$ of homogeneous elements of $H$ and a total order $\leq$ on $\Gamma$ such that
\begin{enumerate}
	\item $\Delta(z_\gamma)  \in 1\otimes z_\gamma +z_\gamma\otimes 1 + (H_+^{<\gamma} \otimes H_+^{<\gamma})_{\deg(z_\gamma)}$ for every index  $\gamma \in \Gamma$, where $H^{<\gamma}$ is the subalgebra of $H$ generated by $K$ and $\{~ z_\delta~|~ \delta \in \Gamma,~ \delta<\gamma ~\}$;
	\item $[z_\gamma , H^{<\gamma}] \subseteq H^{<\gamma}$ for every index  $\gamma \in \Gamma$;
	\item $\{~ z_{\gamma_1} \cdots z_{\gamma_n} ~|~ n\geq0,~ \gamma_1,\ldots, \gamma_n \in \Gamma, ~ \gamma_1\leq \cdots \leq \gamma_n ~\}$ is a homogeneous basis of $H$  as a left graded $K$-module as well as a right graded $K$-module;
	\item $\gkdim H = \gkdim K + \#(\Gamma)$, where $\#(\Gamma)$ is the number of elements of $\Gamma$.
\end{enumerate}
\end{theorem}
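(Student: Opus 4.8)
The plan is to deduce this statement as an immediate special case of Theorem \ref{generator-connected-graded-algebra}, which was proved abstractly for any connected graded algebra carrying a pseudo-comultiplication. The crucial observation—already recorded in the discussion preceding the theorem—is that the comultiplication $\Delta_H$ of a connected graded Hopf algebra is itself a pseudo-comultiplication: for a homogeneous element $x\in H_n$ one has $\Delta_H(x)\in x\otimes 1 + 1\otimes x + (H_+\otimes H_+)_n$, which is exactly the triangularity condition of Definition \ref{definition-pseudo-comultiplication}.

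First I would set $A:=H$, $B:=K$ and $\Delta:=\Delta_H$, and verify the hypotheses of Theorem \ref{generator-connected-graded-algebra}. Since $H$ is in particular a connected graded algebra and $K$ a graded subalgebra, only the two conditions on $\Delta_H$ remain. That $\Delta_H$ is a pseudo-comultiplication holds by the preceding observation. That $\Delta_H(K)\subseteq K\otimes K$ holds because $K$ is a graded Hopf subalgebra, so $\Delta_H$ restricts to the comultiplication of $K$ and in particular carries $K$ into $K\otimes K$.

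With the hypotheses in place, Theorem \ref{generator-connected-graded-algebra} yields an indexed family $(z_\gamma)_{\gamma\in\Gamma}$ of homogeneous elements of positive degree and a total order on $\Gamma$ satisfying its four conclusions verbatim, once we read $A^{<\gamma}$ as $H^{<\gamma}$, read $A_+$ as $H_+$, and substitute $\Delta_H$ for the abstract $\Delta$. Because $H_0=k$, the generators $z_\gamma$ automatically have positive degree, so the phrasing of the present statement (which does not repeat ``of positive degree'') is consistent; conclusions (1)--(4) then match those of the algebra theorem with no alteration.

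There is therefore no genuine obstacle to overcome at this stage. The entire technical burden—the construction of the generators $z_\gamma=[\gamma]+I$ from the $I$-irreducible Lyndon words, the commutator control in part (2), the comparison $\mathcal{N}_J=\mathcal{N}_I\cap\langle X'\rangle$ underlying the $K$-module basis in part (3), and the Gelfand--Kirillov dimension count in part (4)—was discharged inside the proof of Theorem \ref{generator-connected-graded-algebra} via the Lyndon-word machinery of Section \ref{section-Hopf-algebra}'s preceding section. The only thing needing verification here is that the Hopf-algebraic data $(H,K)$ fits the abstract framework, which it does by the two observations above.
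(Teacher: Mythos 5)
Your proposal is correct and is exactly the paper's own proof: the paper dispatches this theorem with the single line ``It is a direct consequence of Theorem \ref{generator-connected-graded-algebra},'' relying on the same two observations you make explicit (that $\Delta_H$ is a pseudo-comultiplication on the connected graded Hopf algebra $H$, as noted just before the theorem, and that $\Delta_H(K)\subseteq K\otimes K$ since $K$ is a graded Hopf subalgebra). Your spelled-out verification of the hypotheses is a faithful, slightly more detailed rendering of the intended argument.
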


\begin{proof}
It is a direct consequence of Theorem \ref{generator-connected-graded-algebra}.
\end{proof}


\begin{corollary}\label{polynomial-generator}
Let $H$ be a commutative connected graded Hopf algebra and $K$ a graded Hopf subalgebra. Then $H$ is isomorphic as a graded algebra to the graded polynomial algebra over $K$ in some family of graded variables.
\end{corollary}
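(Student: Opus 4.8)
The plan is to deduce the result directly from Theorem \ref{generator-connected-graded-Hopf-algebra}, using commutativity to collapse all of the noncommutative data that it produces. First I would apply that theorem to the pair $K\subseteq H$, obtaining a family $(z_\gamma)_{\gamma\in\Gamma}$ of homogeneous elements of positive degree and a total order $\leq$ on $\Gamma$ satisfying Parts (1)--(4). The decisive structural input is Part (3): the nondecreasing products $z_{\gamma_1}\cdots z_{\gamma_n}$ with $\gamma_1\leq\cdots\leq\gamma_n$ form a homogeneous $K$-basis of $H$, on both sides.

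Next I would write down the candidate isomorphism. Let $P:=K[X_\gamma : \gamma\in\Gamma]$ be the commutative polynomial algebra over $K$ in the indeterminates $X_\gamma$, made into a connected graded algebra by declaring $X_\gamma$ homogeneous of degree $\deg(z_\gamma)$; this is legitimate since each $z_\gamma$ has positive degree and $K$ is graded. Define $\phi\colon P\to H$ to be the unique graded $K$-algebra homomorphism with $\phi(X_\gamma)=z_\gamma$. The point that makes $\phi$ well defined is precisely the commutativity of $H$: the images $z_\gamma$ commute pairwise and commute with every element of $K$, so the universal property of the polynomial ring over the commutative $K$-algebra $H$ applies. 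In this setting Part (2) degenerates to the vacuous relation $[z_\gamma,H^{<\gamma}]=0$.

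Finally I would prove that $\phi$ is bijective by comparing bases. A free $K$-basis of $P$ is given by the commutative monomials in the $X_\gamma$, and these are in canonical bijection with the nondecreasing sequences $(\gamma_1\leq\cdots\leq\gamma_n)$ in $\Gamma$. Under $\phi$ such a monomial is sent to $z_{\gamma_1}\cdots z_{\gamma_n}$, which is exactly a basis element of Part (3) --- here commutativity of $H$ is used once more, to rewrite the image of an arbitrary monomial in the nondecreasing normal form. Thus $\phi$ carries a $K$-basis of $P$ bijectively onto a $K$-basis of $H$, so it is an isomorphism of graded $K$-algebras, and in particular an isomorphism of graded algebras.

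The argument is essentially immediate once the main theorem is in hand, and I do not expect any serious obstacle. The only points requiring care are bookkeeping: checking that $\phi$ respects the grading, which holds by the choice $\deg(X_\gamma)=\deg(z_\gamma)$, and confirming that the monomial basis of $P$ is matched with the nondecreasing-product basis of $H$ under $\phi$. Commutativity of $H$ is the single ingredient linking the two, since it both guarantees that $\phi$ is well defined as an algebra map and permits the reordering of factors into the normal form used in Part (3).
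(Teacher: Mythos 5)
Your proposal is correct and matches the paper's argument: the paper proves this corollary simply by citing Theorem \ref{generator-connected-graded-Hopf-algebra} (3), and your write-up just makes explicit the routine verification (well-definedness of $\phi$ via commutativity, matching the monomial basis of $K[X_\gamma:\gamma\in\Gamma]$ with the nondecreasing-product basis) that the paper leaves implicit. No gaps.
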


\begin{proof}
It is a direct consequence of Theorem \ref{generator-connected-graded-Hopf-algebra} (3).
\end{proof}

\begin{corollary}\label{graded-commutative-Hopf-algebra}
Let $H$ be a commutative Hopf algebra which is connected graded as an algebra. Let $K$ be a Hopf subalgebra as well as a graded subalgebra. If $(H_+)^2\cap K= (K_+)^2$ then $H$ is isomorphic as a graded algebra to the graded polynomial algebra over $K$ in some family of graded variables.
\end{corollary}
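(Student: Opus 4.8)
The plan is to reduce the statement to the connected graded Hopf algebra case already settled in Corollary \ref{polynomial-generator}, the obstruction being that the comultiplication of $H$ need not respect the given algebra grading. First I would pass to the coradical filtration. Since $H$ is commutative with $H_0=k$, it is a connected Hopf algebra, so the associated graded $\gr_c(H)$ is a commutative connected graded Hopf algebra; likewise $K$, being a Hopf subalgebra of a connected Hopf algebra, is connected, and $\gr_c(K)$ is a graded Hopf subalgebra of $\gr_c(H)$. Applying Corollary \ref{polynomial-generator} to $\gr_c(H)$ and to $\gr_c(K)$ (each over the base field $k$) shows that $\gr_c(H)$ and $\gr_c(K)$ are polynomial algebras. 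Lifting a polynomial generating set along the coradical filtration (a relation in $H$ would produce one in the associated graded by passing to leading terms) then shows that $H$ and $K$ are, as ungraded algebras, polynomial algebras.

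The next step is a purely algebra-graded lemma: a connected graded commutative algebra that is abstractly a polynomial algebra is a graded polynomial algebra, and is in fact freely generated by any homogeneous lift of a basis of its space of indecomposables $Q(-):=(-)_+/((-)_+)^2$. Indeed, such a homogeneous lift generates by the graded Nakayama lemma, and the induced graded surjection from a polynomial algebra on these generators is injective because both sides are polynomial algebras on the same number of generators (reducing to finitely many generators at a time and comparing Krull dimensions of the domains). I would apply this to both $H$ and $K$, so that each is graded polynomial on any prescribed homogeneous basis of $Q(H)$, respectively $Q(K)$.

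It remains to realize the polynomial generators of $K$ as part of those of $H$, and this is exactly where the hypothesis enters. The condition $(H_+)^2\cap K=(K_+)^2$ is equivalent to the injectivity of the induced map $Q(K)=K_+/(K_+)^2 \to H_+/(H_+)^2=Q(H)$, since its kernel is $((H_+)^2\cap K)/(K_+)^2$. I would therefore choose a homogeneous basis of $Q(H)$ containing a basis of the image of $Q(K)$, take homogeneous polynomial generators $\{u_\beta\}$ of $K$ lifting a basis of $Q(K)$, together with homogeneous lifts $\{z_\gamma\}_{\gamma\in\Gamma}$ of the complementary basis vectors. By the lemma of the previous paragraph, $\{u_\beta\}\cup\{z_\gamma\}$ freely generates $H$; since $\{u_\beta\}$ freely generates $K$, this exhibits $H=K[\{z_\gamma\}_{\gamma\in\Gamma}]$ as a graded polynomial algebra over $K$ in the homogeneous variables $z_\gamma$.

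The main obstacle is the mismatch between the two gradings at play: the given algebra grading, with respect to which we want the conclusion, and the coradical filtration, through which polynomiality is produced. Because $\Delta$ is not assumed to be graded, one cannot apply Corollary \ref{polynomial-generator} to $H$ directly, nor form an associated-graded comultiplication for the algebra grading; polynomiality is only accessible abstractly, and the work lies in transporting it to the algebra grading via the indecomposables. The hypothesis $(H_+)^2\cap K=(K_+)^2$ is precisely what guarantees that the variables of $K$ survive as genuine new variables of $H$, so that the transported polynomial presentation is one over $K$ rather than over a proper subquotient; without it, a minimal generator of $K$ could become decomposable in $H$ and $H$ would fail to be polynomial over $K$.
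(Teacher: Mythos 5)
Your reduction collapses at its very first step: the assertion that ``since $H$ is commutative with $H_0=k$, it is a connected Hopf algebra'' is not justified, and it is in fact the crux of the whole statement. Connectedness of a Hopf algebra means the coradical is $k\cdot 1$, a purely coalgebra-theoretic condition, while here $\Delta_H$ is \emph{not} assumed to respect the algebra grading, so the hypothesis $H_0=k$ gives no hold whatsoever on the coradical. That a commutative Hopf algebra which is merely connected graded as an algebra has trivial coradical (equivalently, that the corresponding pro-algebraic group is pro-unipotent) is a substantive theorem of essentially the same depth as the corollary you are proving; indeed, for $K=k$ the corollary itself is exactly the kind of statement you would need to establish it, so your route is close to circular. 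Without this step, the entire coradical-filtration pipeline ($\gr_c(H)$, Corollary \ref{polynomial-generator}, lifting along filter bases) never gets off the ground. The paper's proof is engineered precisely to avoid this point: it uses the translation argument of \cite[Lemma 2.1 (2)]{BGZ} to arrange $\ker\varepsilon_H=H_+$ and then filters by powers of the augmentation ideal $\mathfrak{m}=H_+$; by \cite[Lemmas 3.2, 3.3]{GZ} the associated graded algebra is a connected graded Hopf algebra \emph{whether or not} $H$ itself is connected as a Hopf algebra, and — unlike the coradical filtration — this filtration is compatible with the given grading (yielding the $\mathbb{Z}^2$-grading and the key vanishing $(\mathfrak{m}^i)_r=0$ for $r<i$), which is what makes the transport back to the algebra grading routine and renders your ``graded transport lemma'' unnecessary.

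There is a second, more local gap in that transport lemma itself. Your injectivity argument — ``both sides are polynomial algebras on the same number of generators (reducing to finitely many generators at a time and comparing Krull dimensions)'' — does not survive an infinite family of variables: once you restrict to finitely many lifted generators $z_1,\dots,z_n$, their image generates a subalgebra of $A$ that is no longer known to be polynomial, so there are no two polynomial rings of equal Krull dimension to compare; the finite-variable argument proves nothing about $n$ elements sitting inside an infinitely generated polynomial algebra. The lemma is nevertheless true: for an abstractly polynomial algebra every maximal ideal with residue field $k$ is a translate of the standard augmentation ideal, so $\gr_{A_+}(A)\cong \mathrm{Sym}\bigl(A_+/A_+^2\bigr)$ canonically; the graded surjection $k[Z]\to A$ then induces an isomorphism on $A_+$-adic associated graded algebras, and injectivity follows from separatedness of the adic filtration on the connected graded source. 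With that repair, and with connectedness of $H$ as a Hopf algebra supplied by an honest argument (in effect, by the adic-filtration analysis the paper runs anyway), your $Q(-)$-based bookkeeping — in particular the correct observation that $(H_+)^2\cap K=(K_+)^2$ is exactly injectivity of $K_+/(K_+)^2\to H_+/(H_+)^2$, which mirrors the paper's injectivity of $B_1\to A_1$ — would assemble into a valid proof, but as written the proposal assumes away the main difficulty at step one and mis-argues the key lemma.
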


\begin{proof}
Let $\mathfrak{m}:=H_+= \sum_{n\geq 1} H_n$ and $\mathfrak{n} :=K_+= \sum_{n\geq 1} K_n$.  By the argument of \cite[Lemma 2.1 (2)]{BGZ}, to see the result we may assume $\mathfrak{m} =\ker (\varepsilon_H)$, in which case $\mathfrak{n}=\ker(\varepsilon_K)$. By \cite[Lemma 3.3]{GZ}, $A= \bigoplus_{i\geq 0} \mathfrak{m}^i/\mathfrak{m}^{i+1} $ and  $B= \bigoplus_{i\geq 0} \mathfrak{n}^i/\mathfrak{n}^{i+1} $ are connected graded Hopf algebras with comultiplication and counit  defined in \cite[Lemma 3.2]{GZ}. Since $A$ (resp. $B$) is generated by $A_1=\mathfrak{m}/\mathfrak{m}^2$ (resp. $B_1=\mathfrak{n}/\mathfrak{n}^2$) and commutative, Theorem \ref{generator-connected-graded-Hopf-algebra} (3) tells us that $A$ (resp. $B$) is freely generated as a commutative algebra on any choice of bases for $A_1=\mathfrak{m}/\mathfrak{m}^2$ (resp. $B_1=\mathfrak{n}/\mathfrak{n}^2$). Assume $\mathfrak{m}^2\cap K = \mathfrak{n}^2$. Then the canonical map $B_1\to A_1$ is injective. Fix a basis $X$ for $B_1$ and extend it to a basis $Y$ for $A_1$ under the canonical map $B_1\to A_1$. Then the canonical map $B\to A$ sends different commutative monomials in $X$ to different commutative monomials in $Y$. Since the commutative monomials in  $X$ (resp. $Y$) form a basis for $B$ (resp. $A$), the canonical map $B\to A$ is an injective homomorphism of graded Hopf algebras.  
Therefore, we may consider $B$ as a graded Hopf subalgebra of $A$. Note that $A$ is actually $\mathbb{Z}^2$-graded as an algebra with $A_{(i,j)} = (\mathfrak{m}^i/\mathfrak{m}^{i+1})_j$, and $B$ a $\mathbb{Z}^2$-graded subalgebra. Choose an indexed family of homogeneous elements $\{a_\gamma\}_{\gamma\in \Gamma}$ of $\mathfrak{m}$ such that the family of cosets  $\{~ \overline{a_\gamma} ~ \}_{\gamma\in \Gamma}$, where $\overline{a_\gamma}:=a_\gamma+\mathfrak{m}^2$, forms a  basis of a complement of $B_1$ in $A_1$. Fix a total order $\leq$ on $\Gamma$. Then the set
$$\{~ \overline{a_{\gamma_1}} \cdots \overline{a_{\gamma_n}} ~|~ n\geq0,~ \gamma_1,\ldots, \gamma_n \in \Gamma, ~ \gamma_1\leq \cdots \leq \gamma_n ~\}$$
is a homogeneous basis of  $A$ as a $\mathbb{Z}^2$-graded $B$-module. By a standard application of filtered-graded method and the observation that $(\mathfrak{m}^i)_r = 0$ for integers $0\leq r<i$, it is not hard to conclude that
the set $$\{~ a_{\gamma_1} \cdots a_{\gamma_n} ~|~ n\geq0,~ \gamma_1,\ldots, \gamma_n \in \Gamma, ~ \gamma_1\leq \cdots \leq \gamma_n ~\}$$ forms a homogeneous basis of $H$ as a graded $K$-module. Since $H$ is commutative,  $H$ is a graded  polynomial algebra over $K$ in some family of graded variables.
\end{proof}

Let $H$ be a Hopf algebra and $K$ a Hopf subalgebra. For an element $a\in H$, let $\rho_H(a)$ be the smallest number  $n$ such that $a\in H_{(n)}$. By a \emph{filter basis} of $H$ as a left (resp. right) $K$-module we mean a basis  $(a_i)_{i\in I}$ of $H$ as a left (resp. right) $K$-module such that 
\[
H_{(n)} = \sum_{i\in I} K_{(n-\rho_H(a_i))} \cdot a_i \quad  (\text{resp. } H_{(n)} = \sum_{i\in I}  a_i\cdot K_{(n-\rho_H(a_i))} ), \quad n\geq 0.
\]

\begin{corollary}\label{generator-connected-Hopf-algebra}
Let $H$ be a connected Hopf algebra and $K$ a Hopf subalgebra. Then there is an indexed family $(x_\gamma)_{\gamma\in \Gamma}$ of elements of $\ker(\varepsilon_H)$ and a total order $\leq$ on $\Gamma$ such that $\gkdim H = \gkdim K + \#(\Gamma)$, where $\#(\Gamma)$ is the number of elements of $\Gamma$, and $\{~ x_{\gamma_1} \cdots x_{\gamma_n} ~|~ n\geq0,~ \gamma_1,\ldots, \gamma_n \in \Gamma, ~ \gamma_1\leq \cdots \leq \gamma_n ~\}$ is a filter basis of $H$ as a left $K$-module as well as a right $K$-module.
\end{corollary}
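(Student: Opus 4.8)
The plan is to deduce Corollary \ref{generator-connected-Hopf-algebra} from the graded case (Theorem \ref{generator-connected-graded-Hopf-algebra}) by passing to the associated graded Hopf algebra with respect to the coradical filtration, and then lifting the resulting basis back to $H$ via the standard filtered-graded method. The key geometric input is that for a connected Hopf algebra $H$, the coradical filtration $\{H_{(n)}\}_{n\geq 0}$ is a Hopf algebra filtration, so $\gr_c(H)=\bigoplus_{n\geq 0}H_{(n)}/H_{(n-1)}$ is a connected \emph{graded} Hopf algebra (as recalled just before Theorem \ref{generator-connected-graded-Hopf-algebra}), and likewise $\gr_c(K)$ is a connected graded Hopf algebra. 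First I would argue that, since $K$ is a Hopf subalgebra, its coradical filtration is the restriction of that of $H$ (i.e. $K_{(n)}=K\cap H_{(n)}$), so that the inclusion $K\hookrightarrow H$ induces an injection of graded Hopf algebras $\gr_c(K)\hookrightarrow \gr_c(H)$, realizing $\gr_c(K)$ as a graded Hopf subalgebra of $\gr_c(H)$.

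Next I would apply Theorem \ref{generator-connected-graded-Hopf-algebra} to the pair $\gr_c(K)\subseteq \gr_c(H)$. This furnishes an indexed family $(\bar z_\gamma)_{\gamma\in\Gamma}$ of homogeneous elements of $\gr_c(H)$, together with a total order $\leq$ on $\Gamma$, such that the ordered monomials
\[
\{~ \bar z_{\gamma_1}\cdots \bar z_{\gamma_n} ~|~ n\geq 0,~ \gamma_1\leq\cdots\leq\gamma_n ~\}
\]
form a homogeneous basis of $\gr_c(H)$ both as a left and as a right graded $\gr_c(K)$-module, and moreover $\gkdim \gr_c(H)=\gkdim\gr_c(K)+\#(\Gamma)$. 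Since GK dimension is preserved when passing to the associated graded algebra of the coradical filtration, this already yields $\gkdim H=\gkdim K+\#(\Gamma)$. For each $\gamma\in\Gamma$, say $\bar z_\gamma$ lives in degree $d_\gamma$, I would choose a lift $x_\gamma\in H_{(d_\gamma)}$ representing $\bar z_\gamma$; note that each $\bar z_\gamma$ has positive degree, hence $x_\gamma\in \ker(\varepsilon_H)$ as required (one adjusts $x_\gamma$ by its counit if necessary, which does not change its image in $\gr_c(H)$).

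The remaining task is the filtered-to-filtered lifting argument. Because $\gr_c(H)$ is a free left $\gr_c(K)$-module on the ordered monomials in the $\bar z_\gamma$, a standard filtration argument shows that the corresponding ordered monomials $x_{\gamma_1}\cdots x_{\gamma_n}$ in $H$ form a basis of $H$ as a left $K$-module; the filter-basis condition
\[
H_{(n)}=\sum K_{(n-\rho_H(x_{\gamma_1}\cdots x_{\gamma_n}))}\cdot (x_{\gamma_1}\cdots x_{\gamma_n})
\]
is the precise statement that the filtration on $H$ induced by this module basis coincides with the coradical filtration, which in turn follows because the associated graded module map is an isomorphism by construction. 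The symmetric statement on the right $K$-module side is obtained in the same way from right-module freeness of $\gr_c(H)$. The main obstacle I expect is bookkeeping the filtration degrees carefully: one must verify that $\rho_H$ of an ordered monomial equals the total degree of the corresponding homogeneous element in $\gr_c(H)$, so that lifting respects the filtration and the associated graded of the lifted basis recovers the graded basis of $\gr_c(H)$. Once this compatibility of filtration degrees is pinned down, the freeness and the filter-basis property both transfer by the usual principle that a filtered module map inducing an isomorphism on associated graded objects is itself an isomorphism.
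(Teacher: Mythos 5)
Your proposal is correct and takes essentially the same route as the paper's own proof: the paper likewise invokes \cite[Lemma 5.2.12]{Mont} to regard $\gr_c(K)$ as a graded Hopf subalgebra of $\gr_c(H)$, applies Theorem \ref{generator-connected-graded-Hopf-algebra} to that pair, lifts each $z_\gamma$ to $x_\gamma\in H_{(n_\gamma)}$ adjusted by the counit to lie in $\ker(\varepsilon_H)$, transfers freeness and the filter-basis property by the filtered-graded principle (citing \cite[Lemma I.5.1 (3)]{NaVa}), and gets the GK-dimension equality from \cite[Proposition 3.4]{ZSL3} together with part (4) of the graded theorem. The filtration-degree compatibility you single out as the main point to verify is precisely what the paper records, namely $\rho_H(x_\gamma)=n_\gamma$ and $\rho_H(x_{\gamma_1}\cdots x_{\gamma_n})=\rho_H(x_{\gamma_1})+\cdots+\rho_H(x_{\gamma_n})$, which holds because the ordered monomials have nonzero symbols in $\gr_c(H)$ by the graded basis statement.
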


\begin{proof}
By \cite[Lemma 5.2.12]{Mont}, we may consider $\gr_c (K)$ as a Hopf subalgebra of $\gr_c (H)$ in the natural way. Choose an indexed family $\{z_\gamma\}_{\gamma\in \Gamma}$ of homogeneous elements of $\gr_c (H)$ of positive degrees and   a total order $\leq$ on $\Gamma$ as in Theorem \ref{generator-connected-graded-Hopf-algebra} for the pair $(\gr_c(H), \gr_c(K))$. Then, for each index $\gamma$ with $\deg(z_\gamma)=n_\gamma$, pick an element $x_\gamma\in H_{(n_\gamma)}$ such that $x_\gamma+H_{(n_\gamma-1)} =z_\gamma$ in $\gr_c(H)$. Replacing $x_\gamma$ by $x_\gamma- \varepsilon_H(x_\gamma)$, one may assume $x_\gamma\in \ker(\varepsilon_H)$. Note that $\rho_H(x_\gamma)= n_\gamma$ and $\rho_H(x_{\gamma_1}\cdots x_{\gamma_n}) = \rho_H(x_{\gamma_1}) +\cdots +\rho_H(x_{\gamma_n})$.  By  \cite[Lemma I.5.1 (3) ]{NaVa}, the last part follows directly. In addition, by \cite[Proposition 3.4]{ZSL3} and  Theorem \ref{generator-connected-graded-Hopf-algebra} (4), one has $\gkdim H= \gkdim \gr_c(H) = \gkdim \gr_c(K) + \#(\Gamma) = \gkdim K+\#(\Gamma)$.
\end{proof}

\begin{corollary}\label{connected-commutative-Hopf-algebra}
Let $H$ be a commutative connected Hopf algebra and $K$ a Hopf subalgebra. Then  $H$ is isomorphic as an algebra to the polynomial algebra over $K$ in some family of variables.
\end{corollary}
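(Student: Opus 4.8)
The plan is to reduce the statement to the $K$-module basis provided by Corollary \ref{generator-connected-Hopf-algebra} and then exploit commutativity. First I would apply Corollary \ref{generator-connected-Hopf-algebra} to the pair $(H,K)$ to obtain an indexed family $(x_\gamma)_{\gamma\in\Gamma}$ of elements of $\ker(\varepsilon_H)$ together with a total order $\leq$ on $\Gamma$ such that the set of nondecreasing products $\{\,x_{\gamma_1}\cdots x_{\gamma_n} \mid n\geq 0,\ \gamma_1\leq\cdots\leq\gamma_n\,\}$ is a filter basis of $H$ as a left $K$-module. In particular, forgetting the refinement encoded in the filter condition, this set is an ordinary $K$-module basis of $H$.

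Next I would set up the comparison map. Since $H$ is commutative, $K$ is a commutative subalgebra and the $x_\gamma$ all commute with one another and with $K$. Hence there is a well-defined homomorphism of $K$-algebras $\varphi\colon K[t_\gamma : \gamma\in\Gamma]\to H$ from the polynomial algebra over $K$ in variables $(t_\gamma)_{\gamma\in\Gamma}$, determined by $\varphi|_K=\mathrm{id}_K$ and $\varphi(t_\gamma)=x_\gamma$ for each $\gamma\in\Gamma$.

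The key step is to identify the two bases. The commutative monomials $\prod_{\gamma} t_\gamma^{n_\gamma}$ form a $K$-basis of $K[t_\gamma : \gamma\in\Gamma]$, and they are in bijection with finite multisets of indices, i.e.\ with nondecreasing finite sequences $\gamma_1\leq\cdots\leq\gamma_n$ in $\Gamma$. Under $\varphi$, the monomial corresponding to such a sequence is sent precisely to $x_{\gamma_1}\cdots x_{\gamma_n}$; here commutativity of $H$ is what lets one rewrite an arbitrary product of the $x_\gamma$ in nondecreasing order. Thus $\varphi$ carries the monomial $K$-basis of $K[t_\gamma : \gamma\in\Gamma]$ bijectively onto the $K$-basis of $H$ recorded in the first paragraph, so $\varphi$ is an isomorphism of left $K$-modules, hence an isomorphism of $K$-algebras, which is exactly the assertion.

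I do not expect a serious obstacle here: once Corollary \ref{generator-connected-Hopf-algebra} is in hand the argument is purely formal. The only points requiring care are the verification that the filter basis is genuinely a free $K$-module basis (which is precisely its defining property, so no work is needed beyond unwinding the definition) and the bookkeeping of the bijection between nondecreasing index sequences and commutative monomials, which makes $\varphi$ send one basis to the other term by term.
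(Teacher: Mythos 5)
Your proposal is correct and takes essentially the same route as the paper, which dismisses the statement as a direct consequence of Corollary \ref{generator-connected-Hopf-algebra}; you have merely made explicit the routine verification that commutativity of $H$ yields a well-defined $K$-algebra map $t_\gamma \mapsto x_\gamma$ carrying the monomial $K$-basis of the polynomial algebra bijectively onto the $K$-module basis of nondecreasing products. No gaps: the filter basis is by definition a free $K$-module basis, so your basis-to-basis argument closes the proof.
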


\begin{proof}
It is a direct consequence of Corollary \ref{generator-connected-Hopf-algebra}.
\end{proof}

Now we turn to consider the structure of connected (graded) Hopf algebras of finite GK dimension. First we recall the definition of Hopf Ore extensions and iterated Hopf Ore extensions.
\begin{definition}\label{definition-Hopf-Ore}
Let $R$ be a Hopf algebra and $n\geq 2$. 
\begin{enumerate}
	\item A Hopf algebra $H$ is called a \emph{Hopf Ore extension} (\emph{HOE}, for short) of $R$ if it contains $R$ as a Hopf subalgebra and  is an Ore extension of $R$ as algebras, that is $H=R[X;\sigma,\delta]$ for some algebra automorphism $\sigma$ of $R$ and $\sigma$-derivation $\delta$ of $R$. We say that the HOE $H$ is \emph{of derivation type} if $\sigma$ can be chosen to be the identity map of $R$. 
    \item A Hopf algebra $H$ is called an \emph{($n$-step) iterated Hopf Ore extension} (\emph{($n$-step) IHOE}, for short) of $R$ if it contains $R$ as a Hopf subalgebra and there is a chain of Hopf subalgebras \[R=H^0\subset H^1 \subset \cdots \subset H^n= H\]such that $H^i$ is an Ore extension of $H^{i-1}$ as algebras for $i=1,\ldots, n$. We say that the IHOE $H$ is \emph{of derivation type} if all successive HOEs'  can be chosen to be  of derivation type .
\end{enumerate}
By convention, a $0$-step IHOE of $R$ is just $R$ itself; a $1$-step IHOE of $R$ is just a HOE of $R$. One similarly define the concept of  \emph{graded HOE} and \emph{graded $n$-step IHOE} for graded Hopf algebras.
\end{definition}

\begin{theorem}\label{IHOE-graded}
Let $H$ be a connected graded Hopf algebra of GK dimension $d$ and $K$ a graded Hopf subalgebra of GK dimension $e$. Then $H$ is a graded $(d-e)$-step  IHOE of $K$ of derivation type.
\end{theorem}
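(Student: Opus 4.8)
The plan is to read the chain directly off Theorem \ref{generator-connected-graded-Hopf-algebra}. Applying that theorem to the pair $(H,K)$ produces a family $(z_\gamma)_{\gamma\in\Gamma}$ of homogeneous elements together with a total order $\leq$ on $\Gamma$ satisfying conditions (1)--(4). By Part (4), $\#(\Gamma)=\gkdim H-\gkdim K=d-e$; in particular $\Gamma$ is finite, so as a finite totally ordered set it may be enumerated as $\gamma_1<\gamma_2<\cdots<\gamma_{d-e}$. (The pathology recorded in the Remark following Theorem \ref{generator-connected-graded-algebra}, where $(\Gamma,\leq)$ has no least element, cannot occur here.) For $0\leq i\leq d-e$ I would set $H^i$ to be the subalgebra generated by $K$ and $z_{\gamma_1},\ldots,z_{\gamma_i}$, so that $H^0=K$, that $H^{d-e}=H$ by Part (3), and that $H^i=H^{<\gamma_{i+1}}$ for $i<d-e$ in the notation of the theorem.

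Next I would check that each $H^i$ is a graded Hopf subalgebra. It is a graded subalgebra by construction, and closure under $\varepsilon_H$ is automatic since every $z_\gamma$ has positive degree. Closure under $\Delta_H$ follows from Part (1): for each $j\leq i$ one has $\Delta_H(z_{\gamma_j})\in 1\otimes z_{\gamma_j}+z_{\gamma_j}\otimes 1+(H_+^{<\gamma_j}\otimes H_+^{<\gamma_j})$, and since $z_{\gamma_j}\in H^i$ and $H^{<\gamma_j}\subseteq H^i$ this lies in $H^i\otimes H^i$; as $H^i$ is generated as an algebra by $K$ and the $z_{\gamma_j}$ with $j\le i$, and $\Delta_H(K)\subseteq K\otimes K$, multiplicativity of $\Delta_H$ yields $\Delta_H(H^i)\subseteq H^i\otimes H^i$. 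Thus $H^i$ is a connected graded sub-bialgebra, and its antipode is forced by the standard degree recursion $S_H(a)=-a-\sum S_H(a')a''$ applied to the reduced coproduct of $a\in H^i_+$; since that coproduct already lies in $H^i_+\otimes H^i_+$ with both factors of strictly smaller degree, induction on degree keeps the recursion inside $H^i$, so $S_H(H^i)\subseteq H^i$ and $H^i$ is a graded Hopf subalgebra.

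The core step is to identify each inclusion $H^i\subseteq H^{i+1}$ as a graded Ore extension of derivation type. Put $z:=z_{\gamma_{i+1}}$. By Part (2), $[z,H^i]=[z_{\gamma_{i+1}},H^{<\gamma_{i+1}}]\subseteq H^{<\gamma_{i+1}}=H^i$, so $\delta:=[z,-]|_{H^i}$ is a well-defined $k$-linear map $H^i\to H^i$; being a commutator it satisfies the Leibniz rule and is homogeneous of degree $\deg(z)$, hence a graded derivation of $H^i$. It then remains to show that $H^{i+1}$ is free as a left $H^i$-module on $\{1,z,z^2,\ldots\}$, for then the relation $zr=rz+\delta(r)$ identifies $H^{i+1}$ with the graded Ore extension $H^i[z;\mathrm{id},\delta]$. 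This freeness I would extract from the left $K$-basis of Part (3): each nondecreasing monomial in $z_{\gamma_1},\ldots,z_{\gamma_{i+1}}$ factors uniquely as $w\,z^p$ with $w$ a nondecreasing monomial in $z_{\gamma_1},\ldots,z_{\gamma_i}$ and $p\geq 0$, because $z=z_{\gamma_{i+1}}$ is the largest generator involved and so all its occurrences sit at the right end. Hence $\{w\,z^p\}$ is a left $K$-basis of $H^{i+1}$ and $\{w\}$ a left $K$-basis of $H^i$; linear independence of the $K$-basis makes $h\mapsto h\,z^p$ injective on $H^i$ for each $p$, giving $H^{i+1}=\bigoplus_{p\geq 0}H^i z^p$ as left $H^i$-modules, which is exactly the required freeness.

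Assembling these, the chain $K=H^0\subset H^1\subset\cdots\subset H^{d-e}=H$ exhibits $H$ as a graded $(d-e)$-step IHOE of $K$ of derivation type. I expect the only point requiring genuine care to be the module-freeness in the core step; everything else is formal once Theorem \ref{generator-connected-graded-Hopf-algebra} is available, the real content of the argument having been absorbed into that theorem. Even the freeness reduces to organizing the unique factorization of nondecreasing monomials and transporting the $K$-basis of Part (3) to an $H^i$-basis, so I regard it as the main but rather modest obstacle.
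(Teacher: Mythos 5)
Your proposal is correct and follows the paper's own proof essentially verbatim: the same chain $H^0=K\subset H^1\subset\cdots\subset H^{d-e}=H$ with $H^i$ generated by $K$ and $z_{\gamma_1},\ldots,z_{\gamma_i}$, the Hopf-subalgebra property from Part (1) (the paper leaves the antipode step implicit, which your degree recursion supplies), the derivation $\delta=[z_{\gamma_{i+1}},-]$ from Part (2), and freeness of $H^{i+1}$ over $H^i$ on $\{1,z,z^2,\ldots\}$ deduced from Parts (2)--(3) by induction on $i$, exactly as the paper asserts ``by Theorem \ref{generator-connected-graded-Hopf-algebra} (2, 3) and induction on $i$.'' The one point to keep explicit is that Part (3) alone only gives linear independence of your monomials $w\,z^p$; their spanning of $H^{i+1}$ requires the commutation $zr=rz+\delta(r)$ from Part (2) to push all occurrences of $z=z_{\gamma_{i+1}}$ to the right (together with the inductive identification of $H^i$ with the span of the $w$'s) --- an ingredient you have in hand but invoke only implicitly, and which the paper uses in the same tacit way.
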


\begin{proof}
Let $l=d-e$. Fix an  indexed family $\{z_\gamma\}_{\gamma\in \Gamma}$ of homogeneous elements of $H$ of positive degrees and a total order $\leq$ on $\Gamma$ as in Theorem \ref{generator-connected-graded-Hopf-algebra}. Since $\#(\Gamma)=l$, we may assume $\Gamma= \{~1,\ldots, l ~\}$ equipped with the natural order. Let $H^0= K$; let $H^i$ be the subalgebra of $H$ generated by $K$ and $z_1,\ldots, z_i$ for $i=1,\ldots, l$. Clearly,  $H^i$ are all  Hopf subalgebras of $H$ by Theorem \ref{generator-connected-graded-Hopf-algebra} (1) and induction on $i$. Also, one may readily conclude by Theorem \ref{generator-connected-graded-Hopf-algebra} (2, 3) and induction on $i$ that the set \[\{~ 1, ~ z_i, ~ z_i^2, \ldots ~\}\] is a homogeneous basis of $H^i$ as a graded left $H^{i-1}$-module for $i=1,\ldots, d$. In addition, according to Theorem \ref{generator-connected-graded-Hopf-algebra} (2), one may define a graded map $\delta_i:H^{i-1} \to H^{i-1}$ of degree $\deg(z_i)$, $i=1,\ldots, d$, by $$f\mapsto z_i \cdot f -f\cdot z_i, \quad f\in H^{i-1}.$$ It is easy to check that $\delta_i$ is a derivation of $H^{i-1}$  and $H^{i} = H^{i-1} [z_i;\delta_i].$
\end{proof}

\begin{corollary}
Let $H$ be a connected graded Hopf algebra of GK dimension $d$.  Then $H$ is a $(d-e)$-step graded IHOE of the universal enveloping algebra of $P(H)$ of derivation type, where $P(H)$ is the primitive space of $H$ and $e$ is the dimension of $P(H)$.
\end{corollary}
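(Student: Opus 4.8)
The plan is to deduce this corollary directly from Theorem~\ref{IHOE-graded} by exhibiting the right choice of graded Hopf subalgebra $K$. The natural candidate is the subalgebra $U(P(H))$ generated by the primitive space $P(H)$. First I would verify that $U(P(H))$ is indeed a graded Hopf subalgebra of $H$: since each primitive element $x$ satisfies $\Delta(x)=x\otimes 1+1\otimes x$, the subalgebra it generates is closed under the comultiplication, the counit and the antipode, and since $P(H)=\bigoplus_{n\geq1}(P(H)\cap H_n)$ is a graded subspace of $H_+$, this subalgebra inherits the grading. In characteristic zero the universal property of the enveloping algebra gives a canonical graded Hopf algebra map $U(P(H))\to H$, and the key point is that this map is \emph{injective}, so that $K:=U(P(H))$ really sits inside $H$ as stated.

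The injectivity is the step I expect to be the genuine obstacle, since it is exactly the Milnor--Moore type statement that the enveloping algebra of the primitives embeds. One clean way to get it within the present framework is to apply Theorem~\ref{generator-connected-graded-Hopf-algebra} to the pair $(H,k)$ with $K=k\cdot 1_H$: this produces PBW generators $(z_\gamma)_{\gamma\in\Gamma}$ for $H$, and the degree-one part of the index set (the generators $z_\gamma$ that are primitive) spans $P(H)$; a standard PBW/filtered-graded argument then shows that the monomials in a basis of $P(H)$ are linearly independent in $H$, which is precisely the injectivity of $U(P(H))\to H$. Alternatively one may simply invoke the Cartier--Milnor--Moore theorem, valid over a field of characteristic zero, to identify the subalgebra generated by $P(H)$ with $U(P(H))$.

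Once $K=U(P(H))$ is established as a graded Hopf subalgebra, the remaining work is a GK-dimension computation. Here I would use that $U(P(H))$ is the enveloping algebra of the Lie algebra $P(H)$, whose dimension is $e$ by hypothesis, and that by the classical PBW theorem $\gkdim U(P(H))=\dim P(H)=e$. With $\gkdim H=d$ and $\gkdim K=e$ in hand, Theorem~\ref{IHOE-graded} applies verbatim to the pair $(H,K)$ and yields that $H$ is a graded $(d-e)$-step IHOE of $K=U(P(H))$ of derivation type, which is exactly the assertion. The only subtlety to keep in mind is that $e$ should be finite for the statement to be meaningful; this is automatic since $P(H)\subseteq H_+$ and $\gkdim H=d<\infty$ forces $\dim P(H)\leq d$, so indeed $e\leq d$ and the extension has the stated finite number of steps.
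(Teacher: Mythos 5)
Your proposal is correct and follows essentially the same route as the paper: the paper's proof consists precisely of observing that $U(P(H))$ is canonically a graded Hopf subalgebra of $H$ of GK dimension $e$ and then invoking Theorem~\ref{IHOE-graded}. You merely spell out the details the paper leaves implicit (the embedding $U(P(H))\hookrightarrow H$, for which your appeal to Cartier--Milnor--Moore in characteristic zero is the clean justification --- your alternative argument via the PBW generators of $(H,k)$ is sketchier, since it is not immediate that the primitive $z_\gamma$'s span $P(H)$ --- and the standard facts $\gkdim U(P(H))=\dim P(H)=e\leq d$).
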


\begin{proof}
Note that the universal  enveloping algebra of $P(H)$  is canonically  a graded Hopf subalgebra of $H$ and of GK dimension $e$. The result then follows directly by Theorem \ref{IHOE-graded}.
\end{proof}

\begin{proposition}
Let $H$ be a connected Hopf algebra of GK dimension $d$ and $K$ a Hopf subalgebra of GK dimension $d-1$. Let $r$ be the smallest number such that $H_{(r)}\neq K_{(r)}$. Assume that $K$ is generated by $K_{(r-1)}$ as an algebra. Then $H$ is a Hopf Ore extension of $K$.
\end{proposition}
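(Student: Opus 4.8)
The plan is to reduce the problem to the single extra generator furnished by Corollary \ref{generator-connected-Hopf-algebra} and then verify the Ore extension structure by hand. Since $\gkdim H = d$ and $\gkdim K = d-1$, that corollary gives $\#(\Gamma) = 1$, so there is one element $x \in \ker(\varepsilon_H)$ such that $\{~1, x, x^2, \ldots~\}$ is simultaneously a filter basis of $H$ as a left $K$-module and as a right $K$-module; in particular $H = \bigoplus_{i\geq 0} K x^i = \bigoplus_{i\geq 0} x^i K$ and $\rho_H(x^i) = i\,\rho_H(x)$. First I would identify $r = \rho_H(x)$: the filter-basis identity $H_{(n)} = \sum_{i\geq 0} K_{(n - i\rho_H(x))}\, x^i$ shows $H_{(n)} = K_{(n)}$ whenever $n < \rho_H(x)$, while $H_{(\rho_H(x))} = K_{(\rho_H(x))} \oplus k x \neq K_{(\rho_H(x))}$ because $x \notin K$ (recall also that $K_{(n)} = K \cap H_{(n)}$, so filtration degrees of elements of $K$ are unambiguous).

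The heart of the argument is to establish $x K \subseteq K + K x$ (and symmetrically $K x \subseteq K + x K$). For this I would use that the coradical filtration is an algebra filtration together with the decomposition of each $H_{(n)}$ afforded by the filter basis. Explicitly, for $a \in K$ with $\rho_H(a) = s$ one has $x a \in H_{(r+s)} = \bigoplus_{i\geq 0} K_{(r+s-ir)}\, x^i$, and the summand indexed by $i$ vanishes unless $ir \leq r+s$. When $s \leq r-1$ this forces $i \in \{0,1\}$, so $x a \in K_{(r+s)} + K_{(s)}\, x \subseteq K + K x$; the same estimate on the right gives $a x \in K + x K$. This is exactly where the hypothesis enters: since $K$ is generated as an algebra by $K_{(r-1)}$, and since the property ``$x a \in K + K x$'' is stable under products (if $x a = \sigma(a) x + \delta(a)$ and $x b = \sigma(b) x + \delta(b)$ with all coefficients in $K$, then $x(ab) = \sigma(a)\sigma(b)\, x + (\delta(a) b + \sigma(a)\delta(b))$), the inclusion $x K \subseteq K + K x$ propagates from the generators in $K_{(r-1)}$ to all of $K$.

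I expect this degree-counting step to be the main obstacle, in the sense that it is the only place where the specific hypotheses are used and the only place where the conclusion could genuinely fail: without control on the filtration degrees of a generating set of $K$, multiplying $x$ by a high-filtration element of $K$ could produce honest $x^2, x^3, \ldots$ contributions, and then $H$ would not be an Ore extension of $K$ at all.

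Once $x K \subseteq K + K x$ is in hand, the freeness of $\{~1, x, x^2, \ldots~\}$ as a left $K$-basis defines $k$-linear maps $\sigma, \delta : K \to K$ by $x a = \sigma(a) x + \delta(a)$, and associativity of multiplication in $H$ forces $\sigma$ to be an algebra endomorphism and $\delta$ to be a $\sigma$-derivation, with $\sigma(1) = 1$ and $\delta(1) = 0$. It remains to see that $\sigma$ is bijective, and here I would exploit the left--right symmetry: the right-hand inclusion defines maps $\tau, \partial : K \to K$ by $a x = x \tau(a) + \partial(a)$, and comparing the two expansions of $a x$ (resp.\ $x a$) in the left (resp.\ right) $K$-basis yields $\sigma \circ \tau = \mathrm{id}_K = \tau \circ \sigma$, so $\sigma$ is an automorphism with inverse $\tau$. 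Finally, since $\{~1, x, x^2, \ldots~\}$ is a free left $K$-basis of $H$ and the relation $x a = \sigma(a) x + \delta(a)$ holds, the canonical map $K[x; \sigma, \delta] \to H$ is an isomorphism of algebras; as $K$ is a Hopf subalgebra of $H$, this exhibits $H$ as a Hopf Ore extension of $K$.
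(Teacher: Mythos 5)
Your proposal is correct and follows essentially the same route as the paper's own proof: the filter basis $\{1,x,x^2,\ldots\}$ from Corollary \ref{generator-connected-Hopf-algebra}, the identification $\rho_H(x)=r$, the degree count in $H_{(2r-1)}$ giving $x\,K_{(r-1)}\subseteq K+Kx$ (and its mirror image), propagation to all of $K$ via the hypothesis that $K_{(r-1)}$ generates $K$, and bijectivity of $\sigma$ by playing the left and right basis expansions against each other. Your two-sided-inverse formulation $\sigma\circ\tau=\mathrm{id}_K=\tau\circ\sigma$ is just a repackaging of the paper's separate injectivity and surjectivity arguments, and your write-up usefully makes explicit the steps the paper labels ``clearly.''
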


\begin{proof}
By Corollary \ref{generator-connected-Hopf-algebra}, there's an element $x\in H$ satisfying the following conditions:
\begin{enumerate}
	\item  $\{1,x,x^2,\ldots\}$ forms a basis of $H$ as a left  $K$-module; 
	\item  $\{1,x,x^2,\ldots\}$ forms a basis of $H$ as a  right $K$-module; 
	\item  $H_{(n)} = \sum_{i\geq 0}K_{(n-\rho_H(x)\cdot i)}\cdot x^i = \sum_{i\geq 0}x^i\cdot  K_{(n-\rho_H(x)\cdot i)}$ for $n\geq 0$.
\end{enumerate}
Then clearly, $\rho_H(x) =r$, 
\begin{eqnarray*}
x\cdot K_{(r-1)}  \subseteq H_{(2r-1)} =  K_{(r-1)}\cdot x +K_{(2r-1)} \quad {and}\quad K_{(r-1)} \cdot x  \subseteq H_{(2r-1)} =  x\cdot K_{(r-1)}+K_{(2r-1)}.
\end{eqnarray*}
Since $K$ is generated by $K_{(r-1)}$, it follows that 
\begin{eqnarray*}
	x\cdot K  \subseteq   K\cdot x +K \quad {and} \quad K \cdot x  \subseteq  x\cdot K+K.
\end{eqnarray*}
By (1) and  $x\cdot K  \subseteq   K\cdot x +K$, one obtains that $H=K[x;\sigma, \delta]$ for some endomorphism $\sigma$ of $K$ and $\sigma$-derivation $\delta$ of $K$. It remains to show $\sigma$ is an automorphism. If $\sigma(a)=0$ then $x\cdot a = \delta(a)$, and hence $a=0$ by (2). So $\sigma$ is injective. Since $K \cdot x  \subseteq  x\cdot K+K$,  for any $a\in K$ one has \[a\cdot x = x \cdot a_1 + a_2 = \sigma(a_1) \cdot x +\delta(a_1)+a_2\] for some $a_1,a_2\in K$. Then $a=\sigma(a_1)$ by (1). So $\sigma$ is surjective.
\end{proof}

\vskip7mm

\noindent{\it Acknowledgments.}  
The second-named author would like to thank Houyi Yu for helpful discussions, particularly for calling his attention to \cite{MaRe}. G.-S. Zhou is supported by the NSFC (Grant Nos. 11971141 \& 11871186), the Fundamental Research Funds for the Provincial Universities of Zhejiang, and the K. C. Wong Magna Fund in Ningbo University. The authors thank the referee for his/her careful reading and valuable suggestions.

\vskip7mm

\end{document}